\colorlet{MAGENTA}{magenta}
\newtheorem{thm}{Theorem}[section]
\newtheorem{lem}[thm]{Lemma}
\newtheorem{defn}[thm] {Definition}
\newtheorem{exmp}[thm]{Example}
\newtheorem{rem} [thm]{Remark}
\newtheorem{pro}[thm]{Problem}
\newtheorem{theoremA}{Theorem}
\newtheorem{theoremB}{Theorem}
\DeclareMathOperator{\diam}{diam}
\DeclareMathOperator{\dist}{dist}
\DeclareMathOperator{\Per}{Per}
\DeclareMathOperator{\Int}{Int}
\DeclareMathOperator{\Rec}{Rec}
\DeclareMathOperator{\Fix}{Fix}
\DeclareMathOperator{\Crit}{Crit}
\DeclareMathOperator{\Orb}{Orb}
\DeclareMathOperator{\CR}{CR}
\newcommand{\mmod}[1]{\;(\text{mod } #1)}
\newcommand{\hf}{\hat{f}}
\newcommand{\Astat}{A_{\text{stat}}}
\newcommand{\Amin}{A_{\text{phys}}}
\def\l{{\cal L}}
\newcommand{\N}{{\mathbb{N}}}
\newcommand{\tf}{\tilde{f}}
\tikzset{
B/.style = {decorate,
            decoration={calligraphic brace, amplitude=5pt,
            raise=7pt, },%mirror},% for mirroring of brace
            very thick,
            pen colour=black},
            }
\tikzset{
C/.style = {decorate,
            decoration={calligraphic brace, amplitude=5pt,
            raise=7pt, mirror},% for mirroring of brace
            very thick,
            pen colour=black},
dot/.style = {circle, fill, inner sep=2pt, outer sep=0pt}
        }
\title[Observable Dynamics and the Generic Coincidence of Attractors]{Observable Dynamics and the Generic Coincidence of Milnor, Statistical, and Physical Attractors}
\date{\today}
	\author[M.\ Fory\'s-Krawiec]{{Magdalena Fory\'s-Krawiec}}
	\author[J. Hantakova]{{Jana Hant\' akov\' a}}
	\author[M.\ Kowalewski]{Micha\l{}~Kowalewski}
	\author[P.\ Oprocha]{Piotr Oprocha}
\address[M.\ Fory\'s-Krawiec]{AGH University of Krakow, Faculty of Applied Mathematics,
al.\ Mickiewicza 30, 30-059 Krak\'ow, Poland.}
\email{maforys@agh.edu.pl}
\address[J. Hant\' akov\' a]{Silesian University in Opava, Mathematical Institute, Na Rybn\' i\v cku 626/1, 746 01 Opava, Czech Republic}
\email{jana.hantakova@math.slu.cz}
\address[M. Kowalewski]{AGH University of Krakow, Faculty of Applied Mathematics,
al.\ Mickiewicza 30, 30-059 Krak\'ow, Poland.}
\email{kowalewski@agh.edu.pl}
 \address[P.\ Oprocha]{
Centre of Excellence IT4Innovations - Institute for Research and Applications of Fuzzy Modeling, University of Ostrava, 30. dubna 22, 701 03 Ostrava 1, Czech Republic
-- $\&$ --
AGH University of Krakow, Faculty of Applied Mathematics,
al.\ Mickiewicza 30, 30-059 Krak\'ow, Poland.}
\email{piotr.oprocha@osu.cz}
\newcommand{\Graph}[1]{%
  \ifnum#1=0
    \draw[line width=0.1pt] (0,2/3) -- (1,1/4);
  \else
    \pgfmathtruncatemacro{\nextlevel}{#1-1}
    \draw[line width=0.1pt] (0,2/3) -- (1/3,1);
    \draw[line width=0.1pt] (1/3,1) -- (2/3,2/27);
    \begin{scope}[cm={1/9,0,0,1/9,(2/3,0)}]
      \Graph{\nextlevel}
    \end{scope}
    \draw[line width=0.1pt] (7/9,1/36) -- (8/9,8/27);
    \begin{scope}[cm={1/9,0,0,1/9,(8/9,2/9)}]
      \Graph{\nextlevel}
    \end{scope}
  \fi
}
\newcommand{\InnerGrid}[1]{%
  \ifnum#1>0
    \pgfmathtruncatemacro{\nextlevel}{#1-1}
    \draw[gray!25,densely dotted]
      (1/3,0) -- (1/3,1) (2/3,0) -- (2/3,1)
      (0,1/3) -- (1,1/3) (0,2/3) -- (1,2/3);
    \begin{scope}[cm={1/3,0,0,1/3,(0,2/3)}]\InnerGrid{\nextlevel}\end{scope}
    \begin{scope}[cm={1/3,0,0,1/3,(2/3,0)}]\InnerGrid{\nextlevel}\end{scope}
  \fi
}
\newcommand{\FirstGrid}[1]{%
  \draw[gray!25,densely dotted]
    (1/3,0) -- (1/3,1) (2/3,0) -- (2/3,1)
    (0,1/3) -- (1,1/3) (0,2/3) -- (1,2/3);
  \ifnum#1>1
    \pgfmathtruncatemacro{\nextlevel}{#1-1}
    \begin{scope}[cm={1/3,0,0,1/3,(0,0)}]\InnerGrid{\nextlevel}\end{scope}
    \begin{scope}[cm={1/3,0,0,1/3,(2/3,2/3)}]\InnerGrid{\nextlevel}\end{scope}
  \fi
}
\begin{document}
\begin{abstract}
We study observable dynamics of generic continuous interval maps. We prove that, for a residual subset of $C([0,1])$, the global Milnor, statistical, and physical attractors coincide with the non-wandering set. Thus, for a typical interval map, the asymptotic dynamics detected by Lebesgue-almost every initial condition recovers all recurrent dynamics. The common attractor is a robust Cantor set of zero Hausdorff dimension. Moreover, every invariant probability measure has a basin of zero Lebesgue measure. We also present examples showing that, outside the generic setting, positive topological entropy may occur outside the global Milnor attractor, and the three attractors need not coincide.

\medskip
\noindent \textbf{Keywords:} interval dynamics; Milnor attractor; statistical attractor; physical attractor; generic dynamics.

\noindent \textbf{2020 Mathematics Subject Classification:}
Primary 37E05, 37B35; Secondary 37A05.
\end{abstract}
\maketitle
\section{Introduction}
For a continuous dynamical system, the non-wandering set records all recurrent topological dynamics, whereas Milnor-type attractors describe only the asymptotic behavior visible from a large set of initial conditions. In general, these two objects may differ substantially. The purpose of this paper is to determine their relationship for Baire-generic continuous self-maps of the interval. Our main result reveals a strong connection between these two notions for a typical $f\in C([0,1])$, proving that in these dynamical systems the observable asymptotic dynamics recovers the entire non-wandering set.

There are several natural ways to formalize observable dynamics. Milnor's global attractor~\cite{Milnor} is the smallest closed set containing the $\omega$-limit sets of Lebesgue-almost every point. The statistical attractor of Arnold et al.~\cite{AAIS} ignores sets visited with zero asymptotic frequency, while the physical (or minimal) attractor introduced by Ilyashenko~\cite{Ilyashenko4} is generated by the supports of invariant measures arising from the evolution of Lebesgue measure. These different notions reflect a long-standing discussion about how observability should be formalized mathematically.
Throughout the paper, we use the term ``physical attractor'' only in the above sense, which should not be confused with the support of a physical (SRB) measure.

The above definitions of attractor satisfy the general chain of inclusions
$$
\Amin(f)\subseteq A_{\mathrm{stat}}(f)\subseteq \Lambda(f)\subseteq \Omega(f),
$$
where $\Lambda(f)$ denotes the global Milnor attractor and $\Omega(f)$ the non-wandering set. The first three sets describe, in progressively weaker senses, dynamics observable from Lebesgue-typical initial conditions, whereas the last one contains all recurrent topological dynamics. Thus, the central question addressed in this paper is not merely whether the observable attractors coincide, but whether observable dynamics recovers all recurrent dynamics.

The coincidence problem itself was raised in~\cite{Sharkovsky,Ilyash}:

\begin{pro}[Problem~1, \cite{Ilyash}]\label{pro:Ilyash}
Is it true that, for generic dynamical systems, all the various types of attractors coincide?
\end{pro}

The answer is affirmative in several settings. For example, Okunev~\cite{okunev} proved coincidence of the Milnor and statistical attractors for certain skew products together with Lyapunov stability. On the other hand, physical and statistical attractors may differ~\cite{Kleptsyn}, and generic systems on compact manifolds (in the topology of uniform convergence) may have no physical measures at all~\cite{AA2013}. Related approaches to observable statistical behavior include pseudo-physical measures~\cite{CatTroub,CE1}.

Our starting point is Mizera's description of generic continuous interval maps~\cite{Miz}. His construction produces a full-measure set of points whose $\omega$-limit sets are solenoids and whose individual basins have zero Lebesgue measure. Consequently, Lebesgue-almost every orbit is asymptotically governed by an adding machine. This gives a detailed description of observable dynamics, but does not determine the entire recurrent dynamics. Indeed, an invariant set of zero Lebesgue measure may lie completely outside the observable attractor while still carrying substantial, or even all, of the topological complexity of the system.
In fact, this gap can be maximal: Theorem~\ref{thm:entropy-hidden} exhibits a map whose observable (Milnor) attractor is a zero-entropy union of solenoids, while a disjoint zero-measure Cantor set, at positive distance from it, carries the entire topological entropy of the system. Hence the identity $\Lambda(f)=\Omega(f)$ is in no way automatic.

The main new ingredient of the present paper is a perturbative mechanism that combines Mizera's almost-everywhere description with simultaneous control of the periodic points created throughout the approximation. This allows us to prove
$
\Lambda(f)=\overline{\operatorname{Per}(f)}=\Omega(f).
$
Consequently, for generic interval maps, the observable asymptotic dynamics recovers all recurrent dynamics.

The necessity of our generic assumptions is illustrated by two examples. The first shows that the global Milnor attractor need not contain all recurrent dynamics and may have zero topological entropy while a disjoint zero-measure Cantor repeller carries positive topological entropy. The second shows that the Milnor, statistical, and physical attractors need not coincide.

% \medskip
% \noindent\textbf{Main results.}
We denote by $C([0,1])$ the space of continuous self-maps of the interval endowed with the uniform topology.

\begin{theoremA}\label{thm:A}
There exists a residual subset $\mathcal{R} \subset C([0,1])$ such that for every $f \in \mathcal{R}$ 
the non-wandering set $\Omega(f)$ is a Cantor set of zero Hausdorff dimension, and the Milnor attractor, 
the statistical attractor, and the physical attractor of $f$ all coincide and equal $\Omega(f)$.
\end{theoremA}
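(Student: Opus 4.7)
The plan is to construct $\mathcal{R}$ as a countable intersection $\bigcap_{n\ge 1}\mathcal{U}_n$ of open dense subsets of $C([0,1])$, where each $\mathcal{U}_n$ enforces a finer and finer approximate solenoidal structure. Concretely, I would let $\mathcal{U}_n$ consist of those $f\in C([0,1])$ which admit a cycle of periodic intervals $J_1,\dots,J_{p}$ of period $p\ge n$, total Lebesgue measure at most $1/n$, such that the union of basins $\bigcup_{k\ge 0}f^{-k}(\bigcup_j J_j)$ has Lebesgue measure at least $1-1/n$, and such that each $J_j$ is mapped strictly into the interior of $J_{j+1\,(\mathrm{mod}\,p)}$. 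The conditions are designed so that on $\mathcal{R}$ the non-wandering set is trapped inside an infinitely nested sequence of cycles of periodic intervals whose diameters tend to zero geometrically fast, yielding a Cantor set of zero Hausdorff dimension.

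The two technical inputs are \emph{density} and \emph{openness} of $\mathcal{U}_n$. For openness, if $f\in\mathcal{U}_n$ and $g$ is a sufficiently small $C^0$ perturbation of $f$, the strict invariance $f(J_j)\subset\mathrm{Int}(J_{j+1})$ persists by continuity and the $g$-basin of the cycle differs from the $f$-basin by a set of arbitrarily small measure (since Lebesgue measure of the basin depends upper semicontinuously on the map once the cycle is strictly invariant). For density, I would start from an arbitrary $f$ and, using standard one-dimensional surgery, first approximate it by a piecewise monotone map, then iteratively insert doubling-type renormalizations inside each dynamically significant component: on each periodic interval $J$ already produced at stage $k$, flatten $f^{p_k}$ on a slightly shrunken subinterval and insert a new cycle of $\ge 2$ subintervals whose complement in $J$ is eventually swept into them. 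Iterating this construction and passing to the limit yields, in every $\varepsilon$-ball of $C([0,1])$, a map lying in $\mathcal{U}_n$ for arbitrarily large $n$.

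For $f\in\mathcal{R}$, write $K_n$ for the union of the period-$p_n$ cycle witnessing $f\in\mathcal{U}_n$; one may arrange (by choosing $\mathcal{U}_n$ inside $\mathcal{U}_{n-1}$ appropriately along a diagonal subsequence) that $K_{n+1}\subset K_n$. Then $\Omega(f)=\bigcap_n K_n$ is closed, nowhere dense, perfect (each cycle refines into at least two new pieces), and of zero Hausdorff dimension (by the prescribed super-exponential shrinkage of $|K_n|$). The attractor identifications then rest on the universal inclusions
\[
\Amin(f)\;\subseteq\;\Astat(f)\;\subseteq\;\text{Milnor}(f)\;\subseteq\;\Omega(f),
\]
where the last inclusion is just the fact that $\omega$-limits lie in the non-wandering set. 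To close the loop, I need $\Omega(f)\subseteq\Amin(f)$. Since the basin of $K_n$ has Lebesgue measure at least $1-1/n$, any weak-$*$ accumulation point $\mu_\infty$ of $\frac{1}{N}\sum_{i=0}^{N-1}f^i_*\Leb$ must be supported in $K_n$ for every $n$, hence in $\Omega(f)$. Conversely, inside each cycle $K_n$ the dynamics permutes the $p_n$ intervals, so pushing Lebesgue measure forward and averaging equidistributes the mass across all $p_n$ components of $K_n$; thus for every $x\in\Omega(f)$ and every neighborhood $V$ of $x$, some $\mu_\infty$ assigns positive measure to $V$. Taking closures of supports over all $\mu_\infty$ gives $\Omega(f)$, completing the identification.

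The main obstacle is the last step: showing that the pushed-forward Lebesgue averages genuinely \emph{spread} over the entire Cantor set $\Omega(f)$, rather than concentrating on a proper sub-Cantor set. Two things must be controlled simultaneously along the construction in Step~1: (i) the map $f|_{K_n}$ acts as a \emph{transitive} cyclic permutation on the $p_n$ components (so that mass is not trapped in a sub-cycle), and (ii) the transfer of mass from the complement of $K_n$ into $K_n$ spreads essentially uniformly across the components on timescales up to $p_{n+1}$. The first is ensured by the combinatorial choice in the doubling step; the second is delicate because generically there is no physical measure (as recalled from \cite{AA2013}) and hence no single equidistribution statement to appeal to. The remedy will be to exploit the fact that even though $\frac{1}{N}\sum f^i_*\Leb$ need not converge, \emph{every} subsequential limit inherits the cyclic symmetry of $K_n$, which forces its support to meet every component of $K_n$; letting $n\to\infty$ and taking closures recovers all of $\Omega(f)$. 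Making this rigorous, and verifying it is preserved under the diagonal $G_\delta$ construction, will be the technical heart of the argument.
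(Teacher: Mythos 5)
There is a genuine gap, and it sits at the very center of the statement: your identification $\Omega(f)=\bigcap_n K_n$ cannot hold. Every continuous self-map of $[0,1]$ has a fixed point $q$, and $q$ is non-wandering; but $q$ cannot lie in a cycle $J_1,\dots,J_{p}$ of pairwise disjoint intervals with $f(J_j)\subset\Int J_{j+1\bmod p}$ and $p\ge 2$, since $q=f(q)$ would force $q\in J_j\cap J_{j+1}$. Hence $q\notin K_n$ for all $n\ge 2$ and $\Omega(f)\supsetneq\bigcap_n K_n$. More generally, a generic map has periodic points densely distributed through $[0,1]$ (indeed $\Omega(g)=\overline{\Per(g)}$ generically), and almost none of them lie in a single thin attracting cycle of total measure $\le 1/n$. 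So even if your density/openness and "spreading" arguments were completed, they would identify $\Amin$, $\Astat$ and the Milnor attractor with the proper subset $\bigcap_n K_n\subsetneq\Omega(f)$, not with $\Omega(f)$. (Relatedly, $\bigcap_n K_n$ would be a single solenoid attracting almost every point, whereas the paper shows that generically the basin of each individual solenoid has Lebesgue measure \emph{zero} and the attractor is the closure of uncountably many pairwise incompatible solenoids; a single almost-globally-attracting solenoid is not the generic picture.)

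The two ideas you are missing are exactly the ones the paper's construction is built around. First, the perturbation is engineered so that \emph{every} $K$-periodic orbit of the approximating piecewise linear map is shadowed by a $K$-periodic orbit of the perturbed map lying \emph{inside} the tiny attracting blocks $F_{m_K}$; iterating this over $K$ shows $\Per(g)\subset Z(g)$, i.e.\ the closure of the periodic points is swallowed by the union of solenoids, so $A(g)=\overline{\Per(g)}$. Second, one needs an \emph{a priori} identification of $\Omega(g)$ with $\overline{\Per(g)}$: the paper gets this by intersecting with the residual set of maps having the shadowing property and invoking $\overline{\Per}=\overline{\Rec}$ (Coven--Hedlund/Sharkovsky) together with $\overline{\Rec}=\Omega$ under shadowing (Oprocha). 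Without some such mechanism your upper bound $\Lambda(f)\subseteq\Omega(f)$ is useless for proving equality. Finally, the equidistribution difficulty you flag as "the technical heart" is sidestepped in the paper: to show $\Omega(g)\subseteq\Amin(g)$ one only needs that every open $U$ meeting the attractor contains a block $F^i_m$ of positive measure holding a $K$-periodic point of $g$, so every orbit entering $F^i_m$ returns with frequency at least $1/(2K)$; this verifies the essential-set definition directly, with no control of how pushed-forward Lebesgue mass distributes among components.
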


The key conclusion of Theorem~A is the identity
$
\Lambda(f)=\Omega(f).
$
Coincidence of the observable attractors alone does not imply that they contain all recurrent dynamics. Theorem~A shows that, for generic interval maps, the observable asymptotic dynamics recovers the entire recurrent dynamics of the system.

Our second main result concerns further properties of the generic global Milnor attractor under perturbations. Once the observable attractor is shown to recover all recurrent dynamics, a natural question is whether this description persists under small perturbations. We say that the global Milnor attractor is \emph{robust} at $f$ if the map
$
g\longmapsto\Lambda(g)
$
is continuous at $f$ with respect to the Hausdorff metric.

\begin{theoremB}
For every $f$ in the residual family $\mathcal R$, the global Milnor attractor is robust, satisfies
$\Lambda(f)=\Omega(f)$, and is not Lyapunov stable. Moreover, every invariant probability measure of $f$ has a basin of zero Lebesgue measure.
\end{theoremB}

Here and throughout, we use the notion of Palis attractor specified in Definition~\ref{def:Palis}. Theorem~B shows that generic interval maps possess a canonical robust observable attractor, although they admit no Palis attractor. Thus, for a typical dynamical system, observable dynamics is naturally described by a set rather than by a single invariant measure.

We emphasize that the attractor notions considered in this paper neither require Lyapunov stability nor the existence of an attracting neighborhood, and they are not required to support a physical (SRB) measure. In the purely continuous setting, Abdenur and Andersson~\cite{AA2013} showed that generic maps on compact manifolds admit no physical (SRB) measures, so no single invariant measure describes the asymptotic statistics on a set of positive Lebesgue measure. Nevertheless, our results show that the absence of physical measures does not prevent the existence of a canonical observable attractor. Despite the absence of physical measures and Lyapunov stability, observable dynamics still recovers the entire recurrent dynamics. The resulting attractor is a zero-Hausdorff-dimensional Cantor set. Solenoidal Cantor dynamics in the smooth interval setting was studied, for example, by Blokh and Lyubich~\cite{BL1990}; similar solenoidal structures also play a central role in our construction.

The paper is organized as follows.
Section~2 introduces the attractor notions used throughout the paper and establishes the inclusions
$
\Amin\subseteq A_{\mathrm{stat}}\subseteq\Lambda\subseteq\Omega.
$
Section~3 contains the proof of the generic identity
$
\Lambda(f)=\overline{\operatorname{Per}(f)}
=\overline{\operatorname{Rec}(f)}
=\Omega(f).
$
Section~4 presents examples showing that, outside the generic setting, the global Milnor attractor need not contain all recurrent dynamics and may have zero topological entropy while positive entropy is carried by a disjoint invariant Cantor set. Finally, Section~5 gives examples in which the Milnor, statistical, and physical attractors do not coincide; in particular,
$
\Amin\subsetneq A_{\mathrm{stat}}\subsetneq\Lambda.
$

\section{Preliminaries}
\subsection{Basic definitions and properties}
Throughout the main part of the paper, $X=[0,1]$, $f\colon X\to X$ is continuous, and $\lambda$ denotes Lebesgue measure. We use standard notation for orbit and recurrence. The \textit{orbit} of $x$ is
\[
\operatorname{Orb}_f(x)=\{f^n(x):n\ge 0\},
\]
and its \textit{$\omega$-limit set} is
\[
\omega_f(x)=\bigcap_{n\ge0}\overline{\{f^m(x):m\ge n\}}.
\]
When the map is clear, we write $\operatorname{Orb}(x)$ and $\omega(x)$. A point is \textit{recurrent} if $x\in\omega_f(x)$ and \textit{non-wandering} if every neighborhood $U$ of $x$ satisfies $f^n(U)\cap U\neq\emptyset$ for some $n>0$. We write $\Fix(f)$, $\Per(f)$, $\Rec(f)$, and $\Omega(f)$ for the sets of fixed, periodic, recurrent, and non-wandering points, respectively. The least positive $p$ with $f^p(x)=x$ is the \textit{(prime) period} of a periodic point. Throughout, $\mathbb N$ denotes the positive integers. Finally, $c\in[0,1]$ is called a \textit{critical} point if $f$ is not locally injective at $c$, equivalently, if $f$ is not strictly monotone on any neighborhood of $c$.

\subsection{Measures}\label{sec:measures}
Let $(X, \mu)$ be a measure space and $f \colon X \to Y$ a continuous function. The \textit{pushforward} of the measure $\mu$ under $f$ is defined as $f_* \mu = \mu \circ f^{-1}$, which is a measure on $Y$ capturing the distribution of $f$-images of $\mu$-typical points.

In our context, we are particularly interested in the case where $X = Y$ and $f \colon X \to X$ is a continuous transformation of a compact metric space. Given a reference measure, typically the Lebesgue measure $\lambda$, we define the sequence of pushforward measures $\mu_n = f^n_* \lambda$, and consider the averages:
$$
\mu_n^{\text{avg}} = \frac{1}{n} \sum_{i=0}^{n-1} \mu_i = \frac{1}{n} \sum_{i=0}^{n-1} f^i_* \lambda.
$$
By the Krylov--Bogolyubov procedure, any weak-$^*$ accumulation point of this sequence is an $f$-invariant measure. These limiting measures form the basis for the definition of the \emph{physical} (or \emph{minimal}) attractor, which is given by the closure of the union of the supports of all such limiting measures. An equivalent characterization of the physical attractor will be introduced later via the notion of essential sets.

\subsection{Limit sets}
Recall that the $\omega$-limit set of a point $x\in X$ can equivalently be written as
$$
\omega(x) =\{z\in X:\limsup_{n\to\infty}\chi_U(f^n(x))>0, \forall U\in Nb_z\},
$$
where $\chi_U$ denotes the indicator function of the set $U$, and $Nb_z$ is the system of all neighborhoods of a point $z$. \\
We define the $\sigma$-limit set of a point $x\in X$ as
$$\sigma(x) = \{z\in X:\limsup_{n\to\infty}\frac{1}{n}\sum_{k=0}^{n-1}\chi_U(f^k(x))>0, \forall U\in Nb_z\}.$$
The $\sigma$-limit set is also known as the \emph{minimal center of attraction} \cite{Sigmund} or simply the \emph{statistically limit set} \cite{Sharkovsky}.\\
An $\omega$-limit set $\omega(x)\subset[0,1]$ is called a \emph{solenoid} if it can be written in the form $$\omega(x)=\bigcap_{i=1}^{\infty}\bigcup_{j=1}^{k_i}I_{ij}$$
where $I_{ij}$ are closed intervals, for $i\in\mathbb{N},j=1,\ldots,k_i$, such that the sequence $\{k_i\}_{i=1}^{\infty}$ is unbounded, and
\begin{equation}
    \bigcup_{j=1}^{k_i}I_{ij}\supset \bigcup_{j=1}^{k_{i+1}}I_{i+1,j}\text{, for every }i\in\mathbb{N}
\end{equation}

\begin{equation}
    f(I_{ij})\subset I_{i,j+1} \mod k_i \text{, for every }i\in\mathbb{N}, j=1,\ldots,k_i
\end{equation}
\begin{equation}\label{diam}
    \lim_{i\to\infty}\max_j\operatorname{diam} I_{ij}=0.
\end{equation}

Note that the condition~\eqref{diam} is sometimes omitted, in which case $\omega(x)$ is only required to be covered by the nested intersection of periodic intervals
$\bigcap_{i=1}^{\infty} \bigcup_{j=1}^{k_i} I_{ij}$.
This approach is used, for example, in the work of Blokh or Smítal. However, in the case when $\bigcap_{i=1}^{\infty} \bigcup_{j=1}^{k_i} I_{ij}$ has a nonempty interior, any interval contained in it is a wandering interval. We exclude the possibility of a wandering interval in our construction by requiring condition~\eqref{diam}.\\
The following result, stated as Lemma 1 in \cite{Miz}, establishes that for solenoids, the statistical and the $\omega$-limit sets coincide:
\begin{lem}{\cite[Lemma 1]{Miz}}\label{lem:solenoid} If $\omega(x)$ is a solenoid, then $\sigma(x)=\omega(x).$
\end{lem}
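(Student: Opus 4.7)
\medskip
\noindent\textbf{Proof plan.} The inclusion $\sigma(x)\subseteq\omega(x)$ is immediate from the definitions: if a neighborhood is visited with positive upper density along the orbit of $x$, then it is visited infinitely often, so every point of $\sigma(x)$ is automatically an accumulation point of the orbit. The real content of the lemma is the reverse inclusion $\omega(x)\subseteq\sigma(x)$, and the plan is to exploit the cyclic structure of the nested intervals to produce, for each $z\in\omega(x)$ and each neighborhood $U$ of $z$, an explicit positive lower bound for the visit frequency.

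The first step is to fix $z\in\omega(x)$ and a neighborhood $U\ni z$, and to pick $\eps>0$ so that $B(z,\eps)\subseteq U$. Using condition~\eqref{diam} I choose a level $i$ large enough that $\max_j \diam I_{ij}<\eps$. Since $z\in\omega(x)\subseteq\bigcap_i\bigcup_j I_{ij}$, there is some index $j_0$ with $z\in I_{ij_0}$, and then the diameter estimate forces $I_{ij_0}\subseteq U$. Thus it suffices to show that the orbit of $x$ visits the single interval $I_{ij_0}$ with positive lower density.

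The next step, which I regard as the main technical point, is to argue that the tail of the orbit is trapped inside $\bigcup_{j=1}^{k_i} I_{ij}$ and then cycles through the pieces periodically. Because $\omega(x)\subseteq\bigcup_j I_{ij}$ and $\bigcup_j I_{ij}$ is closed, any subsequence of the orbit lying outside this union would accumulate outside it, contradicting the definition of $\omega(x)$; hence there exists $N_i$ with $f^n(x)\in\bigcup_j I_{ij}$ for all $n\ge N_i$. Combining this with the permutation property $f(I_{ij})\subseteq I_{i,j+1\,(\mathrm{mod}\,k_i)}$, if $f^{N_i}(x)\in I_{ij_1}$, then $f^{N_i+m}(x)\in I_{i,j_1+m\,(\mathrm{mod}\,k_i)}$ for every $m\ge 0$. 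Consequently $f^n(x)\in I_{ij_0}$ for $n$ in an arithmetic progression of common difference $k_i$ inside $[N_i,\infty)$.

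Finally, this arithmetic progression gives the asymptotic density
$$
\lim_{n\to\infty}\frac{1}{n}\sum_{k=0}^{n-1}\chi_{I_{ij_0}}(f^k(x))=\frac{1}{k_i},
$$
and since $\chi_{I_{ij_0}}\le\chi_U$, the lower density of visits to $U$ is at least $1/k_i>0$. Therefore $z\in\sigma(x)$, which completes the reverse inclusion. The only delicate point in the argument is the trapping statement that the orbit eventually lies in $\bigcup_j I_{ij}$; everything else is a direct combinatorial reading of the solenoid structure together with the vanishing-diameter condition~\eqref{diam}.
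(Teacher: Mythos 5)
The paper does not prove this lemma at all --- it is imported verbatim as Lemma~1 of Mizera's paper \cite{Miz} --- so your argument can only be judged on its own terms. Your overall strategy is the standard one, and the endgame is fine: once the tail of the orbit is known to lie in $J=\bigcup_{j=1}^{k_i} I_{ij}$ and to cycle through the pieces, the visits to $I_{ij_0}\subseteq U$ along an arithmetic progression with gap $k_i$ give lower visit density at least $1/k_i>0$, and the inclusion $\sigma(x)\subseteq\omega(x)$ is indeed immediate from the definitions.

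There is, however, a genuine flaw at exactly the step you flag as the main technical point. You justify the trapping claim by saying that if infinitely many orbit points lay outside the closed set $J$, they ``would accumulate outside it,'' contradicting $\omega(x)\subseteq J$. That inference is false: a sequence contained in the open set $[0,1]\setminus J$ can accumulate on the boundary $\partial J$, which is a subset of $J$, so no contradiction with $\omega(x)\subseteq J$ arises and the trapping statement does not follow from your argument. The repair is short but uses two facts you never invoke. First, $J$ is forward invariant, since the cyclic condition $f(I_{ij})\subseteq I_{i,j+1\bmod k_i}$ gives $f(J)\subseteq J$; hence it suffices to show that the orbit enters $J$ once. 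Second, the orbit does enter $J$: every $I_{ij}$ meets the invariant set $\omega(x)$ (push one point of $\omega(x)\cap J$ around the cycle), and since $k_i\to\infty$ the solenoid $\omega(x)$ is infinite, whereas $\partial J$ consists of at most $2k_i$ points; therefore some $w\in\omega(x)$ lies in $\operatorname{int} I_{ij}$ for some $j$, and by the definition of the $\omega$-limit set the orbit visits $\operatorname{int} I_{ij}\subseteq J$ infinitely often. With that substitution the rest of your proof goes through.
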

\subsection{Attractors}
For any nonempty set $U \subset X$, the \emph{basin of attraction} of $U$ is defined as the set of all points $x \in X$ for which the $\omega$-limit set is contained in $U$; that is,
$$
B(U) = \{ x \in X : \omega(x) \subseteq U \}.
$$
There are generally two perspectives when defining attractors in dynamical systems. A \emph{local attractor} describes the long-term behavior of a set of initial conditions of positive (Lebesgue) measure, whereas a \emph{global attractor} governs the behavior of almost all initial conditions, with its basin of attraction covering the entire space up to a set of measure zero. While the following definitions may not always distinguish explicitly between local and global versions, this distinction can typically be made in context.

\begin{defn}[Milnor local attractor]
The Milnor attractor $A \subset X$ is a closed invariant set with $\lambda(B(A))>0$, such that there is no closed proper subset $A' \subset A$ for which $B(A)$ coincides with $B(A')$ up to a set of measure zero.
\end{defn}

\begin{defn}[Milnor global  attractor, likely limit set]
The Milnor global attractor, or the likely limit set, $\Lambda(f) \subset X$ of $f$ is the smallest closed set containing the $\omega$-limit sets of $\lambda$-almost all orbits.
 \end{defn}
The definition of a Palis attractor was introduced in \cite{Palis}; however, in this paper, we adopt the formal version as stated in \cite{Bonatti}.

\begin{defn}[Palis attractor]\label{def:Palis}
A closed invariant set $A \subset X$ is called a Palis attractor if:
\begin{enumerate}
   \item there exists an ergodic invariant probability measure $\mu$ such that $\mathrm{supp}(\mu) = A$;
    \item the basin of attraction $B(A)$ has positive Lebesgue measure, where
    $$
    x \in B(A) \quad \Longleftrightarrow \quad \lim_{n \to \infty} \frac{1}{n} \sum_{i=0}^{n-1} \delta_{f^i(x)} = \mu,
    $$
    with $\delta_z$ denoting the Dirac measure at $z$.
\end{enumerate}
\end{defn}

\begin{defn}[Statistical attractor]\label{def:Astat}
The statistical attractor $A_{stat}(f)$ of $f$ is the smallest closed set such that $\lambda$-almost all orbits spend an average time $1$ in any neighborhood of $A_{stat}(f)$, formally, for every neighborhood $U$ of $A_{stat}(f)$, $$\lim_{n\to\infty}\frac{1}{n}\sum_{k=0}^{n-1}\chi_U(f^k(x))=1$$ for almost all $x\in X$.
\end{defn}
The following theorem shows how to characterize the statistical attractors using $\sigma$-limit sets in the same way as the Milnor global attractors are defined using $\omega$-limit sets. The idea is derived from the work on continuous flows by Karabacak and Ashwin~\cite{Ashwin}.
\begin{thm}\label{def_stat}
The statistical attractor is the smallest closed set that contains the $\sigma$-limit sets of almost all trajectories.
\end{thm}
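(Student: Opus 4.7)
Let $S$ denote the smallest closed set containing $\sigma(x)$ for $\lambda$-almost every $x \in X$; the plan is to verify both inclusions $S \subseteq \Astat(f)$ and $\Astat(f) \subseteq S$. For the first inclusion, I would exploit the second-countability of $X$ to cover the open set $X \setminus \Astat(f)$ by countably many open sets $\{U_n\}_{n \in \mathbb{N}}$ with $\overline{U_n} \cap \Astat(f) = \emptyset$. For each $n$, the set $V_n := X \setminus \overline{U_n}$ is an open neighborhood of $\Astat(f)$ disjoint from $U_n$, so Definition \ref{def:Astat} provides a full-measure set $E_n$ on which the Birkhoff averages over $V_n$ tend to $1$, and hence those over $U_n$ tend to $0$. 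Intersecting the $E_n$ yields one full-measure set $E$ on which, for each $n$, every $z \in U_n$ admits an open neighborhood of vanishing upper visit density; consequently $\sigma(x) \subseteq \Astat(f)$ for every $x \in E$, and minimality of $S$ gives $S \subseteq \Astat(f)$.

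For the reverse inclusion I would show that $S$ itself satisfies the characterizing property of $\Astat(f)$. Fix an arbitrary open neighborhood $V$ of $S$ and suppose, toward contradiction, that
$$
N_V := \left\{ x \in X \;:\; \limsup_{n \to \infty} \frac{1}{n} \sum_{k=0}^{n-1} \chi_{X \setminus V}(f^k(x)) > 0 \right\}
$$
has positive Lebesgue measure. Pick $x \in N_V$ with the additional property $\sigma(x) \subseteq S$; by weak-$*$ compactness of Borel probabilities on the compact space $X$, extract a subsequence $n_j \to \infty$ along which the Birkhoff averages over $X \setminus V$ stay above some $\alpha > 0$ and the empirical measures $\nu_{n_j} := \frac{1}{n_j} \sum_{k=0}^{n_j - 1} \delta_{f^k(x)}$ converge weak-$*$ to some Borel probability $\nu$. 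The portmanteau theorem applied to the closed set $X \setminus V$ gives $\nu(X \setminus V) \geq \alpha > 0$, so there exists $z \in \supp(\nu) \cap (X \setminus V)$. For each open neighborhood $W$ of $z$ one has $\nu(W) > 0$, whence $\liminf_j \nu_{n_j}(W) \geq \nu(W) > 0$ by portmanteau for open sets, so $z \in \sigma(x)$. This contradicts $\sigma(x) \subseteq S \subseteq V$. Therefore $\lim_{n \to \infty} \frac{1}{n} \sum_{k=0}^{n-1} \chi_V(f^k(x)) = 1$ for $\lambda$-almost every $x$; since $V$ was an arbitrary open neighborhood of $S$, the set $S$ satisfies the defining property of $\Astat(f)$, and minimality delivers $\Astat(f) \subseteq S$.

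The principal obstacle is bridging the frequency-theoretic definition of $\sigma(x)$ with the support structure of empirical limit measures: one has to recognize that $z \in \sigma(x)$ precisely when $z$ belongs to the support of some weak-$*$ accumulation point of $(\nu_n)$. Both halves of this correspondence stem from the portmanteau theorem, but the two inequalities point in opposite directions, so one must pair closed sets with $\limsup$ and open sets with $\liminf$ in the right way for the contradiction to close. A minor subtlety worth noting is that Definition \ref{def:Astat} allows the exceptional null set to depend on the chosen neighborhood, so it suffices to argue one open $V \supseteq S$ at a time rather than constructing a single full-measure set valid for all $V$ simultaneously.
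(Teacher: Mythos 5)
Your proof is correct, but it follows a genuinely different route from the paper's. The paper does not argue by double inclusion: it proves a single pointwise equivalence --- for every open $U$, one has $U\cap\sigma(x)=\emptyset$ if and only if $\frac1n\sum_{k=0}^{n-1}\chi_U(f^k(x))\to 0$, the nontrivial direction being handled by covering $U$ with finitely many neighborhoods $V_z$ of vanishing visit frequency and summing --- and then feeds the two resulting descriptions of the complement $\Astat^C(f)$ into the Karabacak--Ashwin Lemma~\ref{lem:Ashwin}, which converts a countable-base formula for $A^C$ into the ``smallest closed set containing $\gamma(x)$ for a.e.\ $x$'' characterization. Your first inclusion ($S\subseteq\Astat(f)$) is essentially that same countable-base mechanism carried out by hand, and it is fine. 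Your second inclusion is where the methods truly diverge: you replace the paper's elementary subcover argument by weak-$*$ compactness of the empirical measures $\nu_{n}$ together with the portmanteau theorem, pairing the closed set $X\setminus V$ with $\limsup$ and open neighborhoods of a support point with $\liminf$; this is correctly executed and closes the contradiction, at the cost of invoking heavier machinery than the paper needs (the paper's proof never mentions empirical measures). What the paper's route buys in exchange is that Lemma~\ref{lem:Ashwin} simultaneously \emph{establishes the existence} of the smallest closed set containing $\sigma(x)$ for a.e.\ $x$, whereas your argument takes the existence of both minimal sets ($S$ and $\Astat(f)$ itself) for granted and uses their minimality twice; since the theorem statement and Definition~\ref{def:Astat} both presuppose these objects via the definite article, this is acceptable, but it is worth being aware that the presupposition is exactly what the lemma legitimizes. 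One cosmetic remark: your final paragraph asserts that $z\in\sigma(x)$ \emph{precisely when} $z$ lies in the support of some weak-$*$ accumulation point of $(\nu_n)$; only the ``if'' direction is used in your argument, so you could state just that implication and avoid having to justify the converse.
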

For the proof, we use the auxiliary lemma derived from \cite{Ashwin}.

\begin{lem}\label{lem:Ashwin}
    Let $X$ be a compact metric space and $\gamma$ be any map from $X$ to the set of closed subsets of $X$. Let $\{U_i\}$ be a countable base for $X$. Consider a subset $A\subset X$; then the following statements are equivalent:
    \begin{enumerate}
        \item $A^C=\bigcup \{ U_i: U_i\cap \gamma(x)=\emptyset \text{ for $\lambda$-almost every }x\in X\}$,
        \item $A$ is the smallest closed subset of $X$ that contains $\gamma(x)$ for $\lambda$-almost every point in $X$.
    \end{enumerate}
\end{lem}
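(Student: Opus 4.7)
The plan is to produce a single candidate set from the formula in (1), verify that it is a closed set which contains $\gamma(x)$ for almost every $x$, and then show by a base argument that any other such closed set must contain it. This simultaneously gives (1) $\Rightarrow$ (2) and (2) $\Rightarrow$ (1), since a smallest closed set with the given property is unique whenever it exists.

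Concretely, I would introduce
\[
\mathcal{V} := \{ U_i : U_i \cap \gamma(x) = \emptyset \text{ for } \lambda\text{-a.e. } x \in X \}, \qquad V := \bigcup \mathcal{V},
\]
so that the set on the right-hand side of (1) is exactly $V^C$, which is closed because $V$ is open. The first step is to check that $V^C$ contains $\gamma(x)$ for $\lambda$-a.e.\ $x$. For each $U_i \in \mathcal{V}$ the exceptional set $E_i := \{ x : \gamma(x) \cap U_i \neq \emptyset \}$ is, by definition of $\mathcal{V}$, $\lambda$-null. Since $\mathcal{V}$ is at most countable (as it is a subfamily of $\{U_i\}$), the union $E := \bigcup_{U_i \in \mathcal{V}} E_i$ is also $\lambda$-null, and for every $x \notin E$ we have $\gamma(x) \cap U_i = \emptyset$ for all $U_i \in \mathcal{V}$, i.e.\ $\gamma(x) \subseteq V^C$. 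This step is the one place where countability of the base is genuinely used.

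The second step is minimality. Let $F$ be any closed subset of $X$ such that $\gamma(x) \subseteq F$ for $\lambda$-a.e. $x$. I would like to show $V^C \subseteq F$, equivalently $F^C \subseteq V$. Pick any $y \in F^C$; since $F^C$ is open and $\{U_i\}$ is a base, some $U_i$ satisfies $y \in U_i \subseteq F^C$. But then for $\lambda$-a.e.\ $x$ we have $\gamma(x) \subseteq F$, hence $\gamma(x) \cap U_i = \emptyset$, so $U_i \in \mathcal{V}$ and $y \in V$. Thus $V^C \subseteq F$, proving that $V^C$ is the smallest closed set containing almost every $\gamma(x)$. Combining the two steps shows (1) $\Rightarrow$ (2); for (2) $\Rightarrow$ (1) the set $A$ from (2) must coincide with $V^C$ by uniqueness of the minimum, giving the displayed formula.

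There is no real obstacle; the only point where one must be careful is to use the countability of $\mathcal{V}$ precisely once, namely to turn a family of null sets into a single null set, and to invoke the base property of $\{U_i\}$ to pass from an open set to a basic open neighborhood in the minimality argument. Monotonicity or invariance of $\gamma$ plays no role, which is why the statement applies equally well with $\gamma = \omega$ and with $\gamma = \sigma$ later in the paper.
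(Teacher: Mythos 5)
Your proof is correct. Note that the paper itself gives no proof of this lemma --- it is stated as an auxiliary fact ``derived from'' the Karabacak--Ashwin reference and used only to prove Theorem~\ref{def_stat} --- so there is no in-paper argument to compare against; your write-up supplies exactly the standard argument one would expect, and it is complete. The two load-bearing points are precisely the ones you flag: countability of $\mathcal{V}$ to merge the exceptional null sets $E_i$ into a single null set (implicitly using that ``$\lambda$-a.e.'' makes sense for each $E_i$, e.g.\ via completeness of $\lambda$), and the base property to show that the complement of any admissible closed set $F$ is swallowed by $V=\bigcup\mathcal{V}$. Establishing that $V^C$ is itself an admissible closed set and is contained in every admissible closed set identifies it as the unique minimum, which yields both implications at once; this is a clean and economical way to organize the equivalence.
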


\begin{proof}[Proof of Theorem \ref{def_stat}]
By definition of $\sigma(x)$, $z\notin \sigma(x)$ iff there is a neighborhood $U$ of $z$ such that $\lim_{n\to\infty} \frac{1}{n}\sum_{k=0}^{n-1} \chi_{U}(f^k(x))=0 $.

By Definition~\ref{def:Astat}, the complement of the statistical attractor can be written as
$$\Astat^C(f)=\bigcup\{U\subset X: U\text{ is open and }\lim_{n\to\infty}\tfrac{1}{n}\sum_{k=0}^{n-1}\chi_U(f^k(x))=0\text{ for almost all }x\in X\}=:\mathcal{N},$$
while Lemma~\ref{lem:Ashwin} applied to $\gamma=\sigma$ and a base $\{ U_i\}$ identifies the smallest closed set containing $\sigma(x)$ for almost every $x$ as the complement of
$$\mathcal{M}:=\bigcup\{U_i\subset X: %U\text{ is open and }
\sigma(x)\cap U_i=\emptyset\text{ for almost all }x\in X\}.$$
Hence it suffices to prove $\mathcal{N}=\mathcal{M}$.

$\mathcal{N}\subseteq\mathcal{M}$: let $U$ be open with $\lim_{n\to\infty}\frac{1}{n}\sum_{k=0}^{n-1}\chi_U(f^k(x))=0$ for almost all $x$, and fix $z\in U$. Choose a basic open set $U_i$ with $z\in U_i\subset U$. Since $\chi_{U_i}\le\chi_U$, we also have $\lim_{n\to\infty}\frac{1}{n}\sum_{k=0}^{n-1}\chi_{U_i}(f^k(x))=0$ for almost all $x$. Taking $U_i$ as the neighborhood of any $y\in U_i$ applying the %above characterization 
definition of $\sigma(x)$ gives $U_i\cap\sigma(x)=\emptyset$ for almost all $x$. Thus $z\in U_i\subseteq\mathcal{M}$, and as $z\in U$ was arbitrary, $U\subseteq\mathcal{M}$.

$\mathcal{M}\subseteq\mathcal{N}$: let $U$ be open with $\sigma(x)\cap U=\emptyset$ for almost all $x$, and fix $z\in U$. Choose an open set $W$ with $z\in W$ and compact closure $\overline{W}\subset U$. For almost all $x$ we then have $\sigma(x)\cap\overline{W}=\emptyset$. Fix such an $x$. Then, for every $y\in\overline{W}$, there is a neighborhood $V_y$ of $y$ such that
$$
\lim_{n\to\infty}\frac{1}{n}\sum_{k=0}^{n-1}\chi_{V_y}(f^k(x))=0.
$$
Since $\overline{W}$ is compact, finitely many $V_{y_1},\ldots,V_{y_m}$ cover $\overline{W}\supseteq W$, and from $\chi_W(f^k(x))\le\sum_{i=1}^{m}\chi_{V_{y_i}}(f^k(x))$ we obtain
\begin{align*}
0\le\lim_{n\to\infty}\frac{1}{n}\sum_{k=0}^{n-1}\chi_W(f^k(x)) &\le \sum_{i=1}^{m}\lim_{n\to\infty}\frac{1}{n}\sum_{k=0}^{n-1}\chi_{V_{y_i}}(f^k(x))=0
\end{align*}
for almost all $x$. Thus $z\in W\subseteq\mathcal{N}$, and as $z\in U$ was arbitrary, $U\subseteq\mathcal{N}$.

Therefore $\mathcal{N}=\mathcal{M}$, and the statement of the theorem follows from Lemma \ref{lem:Ashwin}.
\end{proof}

To formulate the physical attractor in a convenient way, an open set $U \subset X$ is called an \emph{essential set} if there exists a sequence $\{k_n\}_{n \geq 0}$ with $k_n \to +\infty$ and some constants $\varepsilon, \delta > 0$ such that for every $n \in \mathbb{N}$,
$$
\lambda\left( \left\{ x \in X : \frac{1}{k_n} \sum_{i=0}^{k_n - 1} \chi_U(f^i(x)) > \varepsilon \right\} \right) > \delta.
$$
In other words, an open set $U$ is \emph{not essential} if the time averages $\frac{1}{k} \sum_{i=0}^{k-1} \chi_U \circ f^i$ converge to zero in measure with respect to $\lambda$.

We now turn to the concept of \emph{physical attractors}, which encode the observable behavior of Lebesgue-typical orbits. As discussed in Section~\ref{sec:measures}, the Krylov--Bogolyubov procedure applied to the pushforward sequence $(f^n_* \lambda)$ produces $f$-invariant probability measures as weak$^*$ accumulation points of the time averages
$$
\mu_n^{\mathrm{avg}} = \frac{1}{n} \sum_{i=0}^{n-1} f^i_* \lambda.
$$
The physical attractor is defined in \cite{GorIlya, Ilyashenko4} as the smallest closed set containing the supports of all such accumulation points. Rather than working directly with these measures, we use the equivalent formulation via essential sets, following \cite{Bach}, which is more convenient for our purposes.
\begin{defn}[Physical attractor]\label{def:Amin}
The physical attractor $\Amin(f)$ is the set
$$
\Amin(f) = X \setminus \left\{ \bigcup W : W \text{ is open and not essential} \right\}.
$$
\end{defn}

For completeness, we remark that $\Astat(f)$ and $\Lambda(f)$ may also be defined in a similar way; see also \cite{Ilyash3}. For the statistical attractor, $$\Astat(f) = X\setminus \left\{\bigcup W: W \text{ is open and not statistically essential}\right\},$$ where an open set $U$ is not statistically essential if $\frac{1}{k}\sum_{i=0}^{k-1}\chi_U\circ f^i\to 0$ for $\lambda$-almost all  $x\in X$. For the Milnor attractor, $$\Lambda(f) = X\setminus \left\{\bigcup W: W \text{ is open and not Milnor essential}\right\},$$ where an open set $U$ is not Milnor essential if $\chi_U\circ f^k\to 0$ for $\lambda$-almost all  $x\in X$.

It follows that $\Amin(f)$ is the maximal closed set in $X$ such that any neighborhood of an arbitrary point in $\Amin(f)$ is an essential open set. \\

When $f$ is clear from the context, we denote the above attractors by $\Lambda$, $\Astat$ and $\Amin$ respectively. We recall the following fact from \cite{Ilyash3}
\begin{lem}\label{lem:attr}
For any continuous map $f:X\to X$, we have
$$
\Amin(f)\subseteq \Astat(f)\subseteq \Lambda(f)\subseteq \Omega(f).
$$
\end{lem}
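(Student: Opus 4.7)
The plan is to establish the three inclusions separately by exploiting the complementary characterizations of each attractor in terms of ``non-essential'' open sets, recorded just before the lemma. The three essentiality conditions are progressively weaker as one passes from Milnor to statistical to physical, so their negations are progressively stronger, which is exactly what is needed to nest the attractors in the stated order.

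For $\Amin(f)\subseteq \Astat(f)$, I would show that every open set $U$ that is \emph{not statistically essential} is automatically \emph{not essential}. By assumption $\frac{1}{k}\sum_{i=0}^{k-1}\chi_U\circ f^i\to 0$ for $\lambda$-almost every $x\in X$. Since $\lambda$ is a finite measure on the compact space $X$, almost-sure convergence implies convergence in measure, which is precisely the negation of essentiality. Hence the family of not-statistically-essential open sets is contained in the family of not-essential ones; taking unions and then complements in $X$ reverses the inclusion and gives $\Amin(f)\subseteq \Astat(f)$.

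For $\Astat(f)\subseteq \Lambda(f)$, the same bookkeeping applies. If $U$ is not Milnor essential, then $\chi_U(f^k(x))\to 0$ for $\lambda$-almost every $x$; since $\chi_U$ takes values in $\{0,1\}$, this forces, for almost every $x$, an index $K(x)$ with $f^k(x)\notin U$ for all $k\geq K(x)$. Consequently the Cesàro averages $\frac{1}{k}\sum_{i=0}^{k-1}\chi_U(f^i(x))$ are bounded above by $K(x)/k$ and vanish in the limit, so $U$ is also not statistically essential. Passing to complements yields the desired inclusion. The final step $\Lambda(f)\subseteq \Omega(f)$ is classical: the non-wandering set $\Omega(f)$ is closed and contains $\omega_f(x)$ for every $x\in X$, so it is a closed set containing the $\omega$-limit sets of all, and a fortiori $\lambda$-almost all, orbits; minimality in the definition of $\Lambda(f)$ then forces $\Lambda(f)\subseteq \Omega(f)$.

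I do not anticipate any genuine obstacle here: the argument is a direct unwinding of the essential-set characterizations, relying only on the facts that almost-sure convergence implies convergence in measure on a finite measure space and that a $\{0,1\}$-valued sequence which is eventually zero has Cesàro averages tending to zero.
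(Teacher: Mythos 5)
Your proposal is correct. Note, however, that the paper does not actually prove this lemma: it is recalled as a known fact and attributed to Ilyashenko's work \cite{Ilyash3}, so there is no in-paper argument to compare against. Your derivation is a clean, self-contained substitute. It leans on the three ``not essential'' characterizations that the paper records just before the lemma (for $\Amin$ as the official Definition~\ref{def:Amin}, for $\Astat$ as the consequence of Theorem~\ref{def_stat} and Lemma~\ref{lem:Ashwin}, and for $\Lambda$ as an asserted analogue), and the logic is sound: a set that is not statistically essential has Ces\`aro averages tending to $0$ almost everywhere, hence in measure since $\lambda$ is finite, so it is not essential; a set that is not Milnor essential has $\{0,1\}$-valued orbit indicators that are eventually zero almost everywhere, hence vanishing Ces\`aro averages; in each case the family of excluded open sets grows, so the complementary attractors shrink in the stated order. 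The final inclusion $\Lambda(f)\subseteq\Omega(f)$ correctly switches back to the $\omega$-limit-set definition of $\Lambda$ together with the fact that $\Omega(f)$ is closed and contains every $\omega$-limit set. The only caveat worth flagging is that the characterization of $\Lambda(f)$ via ``not Milnor essential'' sets is itself only asserted in the paper (again with reference to \cite{Ilyash3}); if you want the argument to be fully self-contained you should either verify that equivalence (it follows from Lemma~\ref{lem:Ashwin} applied with $\gamma(x)=\omega(x)$, by the same reasoning as in the proof of Theorem~\ref{def_stat}) or instead prove $\Astat(f)\subseteq\Lambda(f)$ directly from $\sigma(x)\subseteq\omega(x)$ and the two ``smallest closed set'' descriptions, which is a one-line alternative.
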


\begin{defn}[Stability]
    A non-empty closed set $A$ with $f(A)=A$ is said to be \textit{(Lyapunov) stable} if for any open set $U\supset A$
   there is an open set $V\supset A$ such that
   $\Orb_f(x)\subset U$ for every $x\in V$.
   
   A stable set $A$ is \textit{asymptotically stable} if additionally there is an open set $W\supset A$ contained in its basin of attraction $W\subset B(A)$.
\end{defn}

It is well known that $A$ is asymptotically stable iff there exists an arbitrarily small open neighborhood $U\supset A$ such that $f(\overline U)\subset U$
and $\bigcap_{n} f^n(U)=\bigcap_{n} f^n(\overline{U})=A$. The reader is referred to chapter V in \cite{BC} for some basic properties of asymptotically stable sets (cf. \cite{Milnor}).

\begin{defn}[Robustness]
Let $A(g)$ be a compact invariant set associated with maps $g$ in a neighborhood of $f$. We say that $A$ is \emph{robust at $f$} if
$$
d_H(A(g),A(f))\longrightarrow 0
\qquad\text{whenever } g\to f
$$
in the uniform topology on $C(X,X)$.
\end{defn}

Equivalently, $A$ is robust at $f$ if $f$ is a continuity point of the set-valued map
$$
\Gamma:C(X,X)\longrightarrow K(X),\qquad
\Gamma(g)=A(g),
$$
where $K(X)$ denotes the space of nonempty compact subsets of $X$ equipped with the Hausdorff metric.

\begin{exmp}
    Consider the family of maps
$$
f_n(x) = x^{\frac{n-1}{n}}, \quad x \in [0,1].
$$
As $n \to \infty$, we have $f_n \to f = Id$ uniformly on $[0,1]$. Therefore, $\Lambda(f) = [0,1],$ since $f$ is the identity map.
For each $n$, observe that $\Lambda(f_n) = \{1\}.$ 
Thus, $\Lambda(f)$ is not robust, and the map $\Gamma(f) = \Lambda(f)$ is not continuous at $f$.
\end{exmp}

An \emph{odometer system} is a topological dynamical system $(X,\tau)$ satisfying the following properties:

\begin{enumerate}
  \item $X$ is a \emph{Cantor set}, i.e., a compact, totally disconnected, perfect metrizable space.
  
  \item $\tau: X \to X$ is a homeomorphism.

  \item The system is \emph{minimal}: for every $x \in X$, the orbit $\{\tau^n(x) : n \in \mathbb{Z}\}$ is dense in $X$.

  \item The system is \emph{equicontinuous}: for every $\varepsilon > 0$, there exists $\delta > 0$ such that
  $$
  d(x, y) < \delta \quad \Rightarrow \quad d(\tau^n(x), \tau^n(y)) < \varepsilon \quad \text{for all } n \in \mathbb{Z},
  $$
  where $d$ is a compatible metric on $X$.
\end{enumerate}
The following tools will be used to show that our constructed typical dynamical system has a robust Milnor attractor.
\begin{lem}
Let $f\colon [0,1] \to [0,1]$ be a continuous map. Suppose there exists a compact invariant set $S \subset [0,1]$ such that the restricted system $(S, f|_S)$ is topologically conjugate to an odometer. Then $S \subset \Lambda(f)$.
\end{lem}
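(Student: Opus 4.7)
The plan is to show that every open neighborhood of every point of $S$ is Milnor essential, in the sense of the characterization of $\Lambda(f)$ recalled after Definition~\ref{def:Amin}; this immediately yields $S\subset\Lambda(f)$. So I would fix $z\in S$ and an open $U\ni z$ and try to produce a subset of $[0,1]$ of positive Lebesgue measure each of whose orbits visits $U$ infinitely often. The main idea is to transfer the odometer's clopen tower structure into genuine closed intervals of $[0,1]$, using only continuity of $f$.

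First, I would use the conjugacy with the odometer to obtain, for each $i\in\mathbb N$, a clopen partition $S=\bigsqcup_{j=0}^{k_i-1} P_{i,j}$ with $k_i\to\infty$, $f(P_{i,j})=P_{i,j+1\bmod k_i}$, and $\max_j\diam(P_{i,j})\to 0$. Since the $P_{i,j}$ are pairwise disjoint compact subsets of $[0,1]$ and $f$ is uniformly continuous, for each $i$ one can pick $\delta_i>0$ with $\delta_i\to 0$ small enough so that (a) the open $\delta_i$-neighborhoods $N_{i,j}=\{x\in[0,1]:\mathrm{dist}(x,P_{i,j})<\delta_i\}$ are pairwise disjoint, and (b) $f(\overline{N_{i,j}})\subset\overline{N_{i,j+1\bmod k_i}}$ for every $j$. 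Then $A_i:=\bigcup_j \overline{N_{i,j}}$ is a finite union of nondegenerate closed intervals, so $\lambda(A_i)>0$; it contains $S$ and satisfies $f(A_i)\subset A_i$.

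Next, for any $x\in A_i$, say $x\in\overline{N_{i,j}}$, induction on $n$ gives $f^n(x)\in\overline{N_{i,j+n\bmod k_i}}$ for every $n\ge 0$. Letting $j_0(i)$ denote the index with $z\in P_{i,j_0(i)}$, one then has $f^n(x)\in\overline{N_{i,j_0(i)}}$ for every $n$ in the arithmetic progression $n\equiv j_0(i)-j\pmod{k_i}$. The elementary bound
$$\diam\bigl(\overline{N_{i,j_0(i)}}\bigr)\le \diam(P_{i,j_0(i)})+2\delta_i$$
forces $\overline{N_{i,j_0(i)}}$, which always contains $z$, to shrink to $\{z\}$ as $i\to\infty$. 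Choose $i$ large enough that $\overline{N_{i,j_0(i)}}\subset U$; then the orbit of every $x\in A_i$ meets $U$ infinitely often, so $\chi_U\circ f^n\not\to 0$ on the positive-measure set $A_i$, i.e., $U$ is Milnor essential. Since $U\ni z$ was arbitrary, this gives $z\in\Lambda(f)$; and since $z\in S$ was arbitrary, $S\subset\Lambda(f)$.

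The main technical point is realising (a) and (b) simultaneously with $\delta_i\to 0$: one has to balance the uniform continuity modulus of $f$ against the minimum inter-piece distance $\min_{j\neq j'}\mathrm{dist}(P_{i,j},P_{i,j'})$, which governs how large $\delta_i$ may be before the neighborhoods merge. For each fixed $i$ this is a standard finite-partition continuity argument, but the choice must be made consistently as $i$ grows so that the neighborhoods $\overline{N_{i,j_0(i)}}$ collapse onto $z$. Once this is arranged, the rest is a direct bookkeeping between the odometer combinatorics and the essentiality description of the Milnor attractor.
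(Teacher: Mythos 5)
There is a genuine gap at your step (b). You claim that, after shrinking $\delta_i$, one can arrange $f(\overline{N_{i,j}})\subset\overline{N_{i,j+1\bmod k_i}}$ for every $j$, and you describe this as a standard finite-partition continuity argument. It is not: uniform continuity only gives $f\bigl(N_{\delta}(P_{i,j})\bigr)\subset N_{\omega_f(\delta)}(P_{i,j+1})$, where $\omega_f$ is the modulus of continuity of $f$, and there is no reason to have $\omega_f(\delta)\le\delta$ (for a map that expands distances transversally to $S$ one has $\omega_f(\delta)>\delta$ for all small $\delta$). Shrinking $\delta$ shrinks both sides of the desired inclusion at the same rate, so nothing is gained; and because the condition must close up around the cycle of length $k_i$, one would essentially need $\omega_f^{k_i}(\delta)\le\delta$. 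What you are really asserting is that $S$ admits arbitrarily small closed forward-invariant neighborhoods $A_i$ with $A_i\subset N_{\delta_i}(S)$ and $\delta_i\to 0$, which is precisely Lyapunov stability of $S$ — a hypothesis the lemma does not grant, and a property that is thematically at odds with the rest of the paper (Theorem B shows the generic attractor built from such odometers is \emph{not} Lyapunov stable). In particular, when the odometer sits inside a solenoidal set accompanied by wandering intervals, points of $N_{\delta}(P_{i,j})$ lying in an image of the wandering interval can be mapped well outside $N_{\delta}(P_{i,j+1})$, so no choice of $\delta_i$ makes $A_i$ forward invariant while still collapsing onto $S$.

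The paper avoids this by using one-dimensional structure rather than metric neighborhoods: Blokh's spectral decomposition forces the maximal $\omega$-limit set containing $S$ to be solenoidal (the basic-set alternative is excluded by a horseshoe argument against equicontinuity), and solenoidal sets come equipped with genuine cycles of periodic \emph{intervals} $I_{i,j}$ — forward-invariant by construction, with positive measure — of which one nested component $I_{i,1}$ can be taken to shrink. This yields a single point of $\Lambda(f)\cap S$, and then minimality of the odometer together with invariance of $\Lambda(f)$ upgrades this to $S\subset\Lambda(f)$. Your essentiality bookkeeping at the end is fine, and the minimality trick would also have shortened your argument, but the construction of the invariant neighborhoods $A_i$ is the missing ingredient and cannot be supplied by continuity alone.
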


\begin{proof}
Let $Q$ be the maximal $\omega$-limit set containing $S$. Such a set $Q$ always exists for interval maps, and if $S$ is infinite, then $Q$ is uniquely determined and infinite (see \cite{Blokh}). In fact, by \emph{Blokh’s spectral decomposition theorem} (Theorem 5.4 in \cite{Blokh}) for $\omega$-limit sets of interval maps, $Q$ must be one of the following types:
\begin{enumerate}
    \item A finite periodic orbit;
    \item A basic set ($Q$ is a cycle of intervals);
    \item A solenoidal set ($Q$ is covered by an intersection of a nested sequence of cycles of periodic intervals with increasing periods).
\end{enumerate}
Since $(S, f|_S)$ is conjugate to an odometer, it is minimal and equicontinuous. This rules out the first two cases - $Q$ cannot be a finite periodic orbit, as $S$ is infinite. If $S$ were contained in a basic set, the semiconjugacy would project $S$ onto an uncountable equicontinuous subset $\hat{S}$ of a transitive interval map $g$ (see \cite{RS}). Replacing $g$ with $g^2$, we can assume that $g$ is mixing. (Namely, in the worst case, when $g$ was only transitive, $\hat S$ splits into two minimal sets and the domain of $g^2$ into two invariant intervals, so we restrict to one of them). There exists an $m>0$ such that $\hat{S}$ splits into at least four invariant Cantor sets for $g^m$. If we take the convex hulls $I_0 := \operatorname{conv}(\hat{S}_0)$ and 
$I_1 := \operatorname{conv}(\hat{S}_1)$ of two distinct invariant Cantor sets 
$\hat{S}_0, \hat{S}_1 \subset \hat{S}$, then, since $g^m$ is mixing, we can find $k>m$
(a multiple of $m$) such that $I_0$ and $I_1$ form a strong horseshoe for $g^k$. Then there exists an invariant set $C \subset I_0 \cup I_1$ such that $g^k|_C$ is almost conjugate (via a map $\pi$) to the full shift on two symbols, and $\hat{S}_0, \hat{S}_1 \subset C$, as they are invariant under $g^k$ and remain in $I_0 \cup I_1$. Note that $\pi(\hat{S}_0)$ is infinite and $(\pi(\hat{S}_0), \sigma)$ is equicontinuous, which is impossible. Hence, $Q$ must be a solenoidal set.

By the structure of solenoidal sets, there exists a sequence of periodic intervals $\{I_{i,j}\}_{j=1}^{k_i}$ of period $k_i$, with $k_i \to \infty$, such that
$$
Q \subset \bigcap_{i=1}^\infty \bigcup_{j=1}^{k_i} I_{i,j}.
$$
It is possible that $\lim_{i\to\infty}\max_j \operatorname{diam}(I_{i,j}) > 0$ as $i \to \infty$ but we may assume, without loss of generality, that the intervals $I_{i,1}$ form a nested sequence with $\operatorname{diam}(I_{i,1}) \to 0$ as $i \to \infty$. 

Let $\Lambda(f)$ denote the Milnor attractor of $f$. Since each $I_{i,1}$ has positive measure and is eventually mapped into itself, it follows that
$$
\lambda(B(\Lambda(f)) \cap I_{i,1}) = \lambda(I_{i,1}),
$$
where $B(\Lambda(f))$ is the basin of $\Lambda(f)$. Hence $\Lambda(f) \cap I_{i,1} \neq \emptyset$ for all $i$. As the intervals $I_{i,1}$ form a nested sequence with diameters decreasing to zero and $\Lambda(f)$ is a closed set, their intersection contains a point of $\Lambda(f)$, and it follows that $S \cap \Lambda(f) \neq \emptyset$. By the minimality of $S$ and by the invariance of $\Lambda(f)$, we conclude that $S \subset \Lambda(f)$.
\end{proof}

\begin{lem}\label{lem:USC} 
Let $f \in C([0,1])$, and suppose there exists a solenoid $S = \omega(x)$ defined by
$$
S = \bigcap_{i=1}^\infty \bigcup_{j=1}^{k_i} I_{i,j},
$$
where:
\begin{itemize}
    \item Each $I_{i,j} \subset [0,1]$ is a closed, nondegenerate interval,
    \item $f(I_{i,j}) \subset \operatorname{int} I_{i,j+1 \bmod k_i}$ for all $i, j$,
    \item $\max_j \operatorname{diam}(I_{i,j}) \to 0$ as $i \to \infty$.
\end{itemize}

Let $\{f_n\}_{n\in\mathbb{N}} \subset C([0,1])$ be a sequence converging 
uniformly to $f$, and let $\Lambda(f_n)$ denote the Milnor attractor of $f_n$ for 
each $n \in \mathbb{N}$. Then:
$$
S \subset \liminf_{n \to \infty} \Lambda(f_n).
$$
\end{lem}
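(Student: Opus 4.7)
The plan is to fix $z \in S$ and $\varepsilon > 0$ and exhibit $N = N(z,\varepsilon)$ such that $\dist(z,\Lambda(f_n)) < \varepsilon$ for every $n \geq N$; by the definition of the Kuratowski $\liminf$ this suffices. The central idea is that the \emph{strict} inclusion $f(I_{i,j}) \subset \Int I_{i,j+1 \bmod k_i}$ makes the cyclic structure of the level-$i$ cycle $\{I_{i,j}\}_{j=1}^{k_i}$ robust under $C^{0}$-small perturbations, after which the invariance properties of the Milnor attractor do the rest.

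More precisely, fix a level $i$. By compactness and the strict interior inclusion there exists $\delta_i > 0$ such that the closed $\delta_i$-neighborhood of $f(I_{i,j})$ is contained in $I_{i,j+1 \bmod k_i}$ for every $j \in \{1,\dots,k_i\}$. Uniform convergence $f_n \to f$ then furnishes $N_i$ with $\|f_n - f\|_\infty < \delta_i$ whenever $n \geq N_i$, giving $f_n(I_{i,j}) \subset I_{i,j+1 \bmod k_i}$ for every $j$. Consequently, $W_i := \bigcup_{j=1}^{k_i} I_{i,j}$ is forward invariant under $f_n$ and $f_n$ permutes its components as a cyclic $k_i$-rotation. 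Since $\lambda(W_i) > 0$ and $W_i$ is closed and forward invariant, for $\lambda$-almost every $x \in W_i$ the nonempty set $\omega_{f_n}(x)$ lies in $W_i$; combined with the defining property $\omega_{f_n}(x) \subset \Lambda(f_n)$ for $\lambda$-a.e.\ $x$, this yields $\Lambda(f_n) \cap W_i \neq \emptyset$. The Milnor attractor is itself forward invariant under $f_n$ (each $\omega$-limit is $f_n$-invariant, so the closure of their union is forward invariant); combining this with the cyclic action of $f_n$ on the components transports a point of $\Lambda(f_n)$ from one $I_{i,j_0}$ into every $I_{i,j}$, so $\Lambda(f_n) \cap I_{i,j} \neq \emptyset$ for every $j$.

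To conclude, condition~\eqref{diam} of the solenoid definition lets me pick $i$ so large that $\max_j \diam I_{i,j} < \varepsilon$; I then choose an index $j(z)$ with $z \in I_{i,j(z)}$ and set $N := N_i$. For every $n \geq N$ there is a point of $\Lambda(f_n) \cap I_{i,j(z)}$, hence a point of $\Lambda(f_n)$ within distance $\varepsilon$ of $z$. Since $z \in S$ and $\varepsilon > 0$ were arbitrary, $S \subset \liminf_{n \to \infty} \Lambda(f_n)$. The main obstacle is the robustness step: it is crucial that the inclusion in the hypothesis is \emph{strict} (into the interior), because only then does it provide the uniform margin $\delta_i$ that survives $C^{0}$-small perturbations. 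Without this strictness the cyclic permutation of the $I_{i,j}$ could be destroyed by arbitrarily small perturbations, and the whole argument would collapse. The remaining ingredients—forward invariance of $\Lambda(f_n)$ and the elementary observation that a closed forward-invariant set of positive Lebesgue measure must intersect $\Lambda(f_n)$—are standard.
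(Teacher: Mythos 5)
Your proposal is correct and follows essentially the same route as the paper: fix $z\in S$ and $\varepsilon>0$, pass to a level $i$ with $\max_j\diam I_{i,j}<\varepsilon$, use the strict interior inclusion plus uniform convergence to transfer the cyclic structure of $\{I_{i,j}\}_{j=1}^{k_i}$ to $f_n$ for large $n$, and then use positivity of the measure of the forward-invariant union to force $\Lambda(f_n)$ to meet each $I_{i,j}$. The only (immaterial) difference is in the last step, where the paper tracks a single orbit visiting all the $I_{i,j}$ while you invoke forward invariance of $\Lambda(f_n)$ to transport an attractor point around the cycle; both are valid.
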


\begin{proof}

Fix any $z \in S$ and $\varepsilon > 0$. Since $\max_j \operatorname{diam}(I_{i,j}) \to 0$, there exists a level $i \in \mathbb{N}$ and an index $j \in \{1, \dots, k_i\}$ such that
$$
z \in I_{i,j}, \quad \text{and} \quad \operatorname{diam}(I_{i,j}) < \varepsilon.
$$
By the assumption,
$$
f(I_{i,j}) \subset \operatorname{int}(I_{i,j+1}).
$$
Because $f_n \to f$ uniformly, there exists $N \in \mathbb{N}$ such that for all $n \geq N$,
$$
\|f_n - f\|_\infty < \delta,
$$
for some $\delta > 0$ small enough that
$
f_n(I_{i,j}) \subset I_{i,j+1}.
$
Thus, for all $n \geq N$, the original family $\{I_{i,j}\}_{j=1}^{k_i}$ satisfies the same cyclic structure under $f_n$:
$$
f_n(I_{i,j}) \subset I_{i,j+1 \bmod k_i}.
$$
Define
$$
I^{(n)} := \bigcup_{j=1}^{k_i} I_{i,j}.
$$
Then $f_n(I^{(n)}) \subset I^{(n)}$, so $I^{(n)}$ is forward-invariant under $f_n$. Since each $I_{i,j}$ is non-degenerate, we have $\lambda(I^{(n)}) > 0$, and thus 
%$\Lambda(f_n) \cap I^{(n)} \neq \emptyset.$
there is $x\in I^{(n)}$ such that $\omega_{f_n}(x)\subset \Lambda(f_n)$. 
Moreover, since $f_n$ maps each $I_{i,j}$ into $I_{i,j+1 \bmod k_i}$, any forward orbit in $I^{(n)}$ visits all the $I_{i,j}$. Hence, the $\omega$-limit set of such an orbit intersects every $I_{i,j}$, and in particular $
\Lambda(f_n) \cap I_{i,j} \neq \emptyset.$
Let $B_\varepsilon(z)$ denote the open $\varepsilon$–ball around $z$. 
Since $I_{i,j} \subset B_\varepsilon(z)$, we obtain
$
\Lambda(f_n) \cap B_\varepsilon(z) \neq \emptyset.
$
As this holds for all $\varepsilon > 0$, we conclude $z \in \liminf_{n \to \infty} \Lambda(f_n).$
Since $z \in S$ was arbitrary, we have
$
S \subset \liminf_{n \to \infty} \Lambda(f_n).
$
\end{proof} 
\section{Properties of the typical attractor on the interval}
The construction builds on the perturbative scheme of Mizera~\cite[Theorem~1]{Miz}, but with a different objective. Mizera's argument controls the $\omega$-limit sets of Lebesgue-almost every point; by itself, this does not exclude additional recurrent subsystems of zero Lebesgue measure. Our main new ingredient is a simultaneous control of the periodic points created throughout the approximation, linking the observable nested structure to the entire recurrent dynamics.

The proof proceeds in four steps. First, the construction produces a full-measure set of points with solenoidal $\omega$-limit sets and shows that their union generates the global Milnor attractor $\Lambda(g)$. Second, the periodic-point control yields
$
\Lambda(g)=\overline{\operatorname{Per}(g)}.
$
Third, the constructed maps satisfy the shadowing property. Since, for interval maps,
$
\overline{\operatorname{Per}(g)}=\overline{\operatorname{Rec}(g)},
$
shadowing implies
$
\overline{\operatorname{Rec}(g)}=\Omega(g).
$
Combining these identities, we obtain
$$
\Lambda(g)=\overline{\operatorname{Per}(g)}
=\overline{\operatorname{Rec}(g)}
=\Omega(g).
$$

In the following proofs, we write $d(x,y)$ for the Euclidean distance on $[0,1]$, and
$\|f-g\|_{\infty}=\sup_{x\in[0,1]}d(f(x),g(x))$ for the $C^0$ metric on $C([0,1])$.
For compact sets $A,B\subset[0,1]$ we denote by $d_H(A,B)$ the Hausdorff distance.

We begin by introducing the combinatorial structure of intervals that allows precise control of some properties of perturbations. These intervals, following the notation of \cite{Miz}, provide a partition of the unit interval into small building blocks with controlled diameters.
\subsection*{Notation}
For $m \in \N$, define
\begin{align*}
M &= 2^m, \quad \eta = \eta_M = 2^{-2m(m+1)}\\
a_i & = \frac{i}{M}, \quad i=1,\dots,M\\
c_i &= \frac12(a_{i-1}+a_i), \quad i=1,\dots,M\\
E^i_m&= [a_{i-1}+\eta, a_i-\eta],\quad F^i_m=[c_i-\eta, c_i+\eta],\quad i=1,\dots,M\\
H^i_m &= (a_i-\eta, a_i+\eta),\quad i=1,\dots,M\\
H^0_m &= [0,\eta), \quad H^M_m=(1-\eta,1]\\
E_m&=\bigcup_{i=1}^ME^i_m,\quad F_m=\bigcup_{i=1}^MF^i_m,\quad H_m = \bigcup_{i=0}^MH^i_m\\
L_m&=\bigcup_{i\geq 0}E^{2i+1}_m, \quad G_m=\bigcup_{i\geq 0}F^{2i+1}_m\\
R_m&=\bigcup_{i\geq 1}E^{2i}_m,\quad D_m=\bigcup_{i\geq 1}F^{2i}_m\\
C_m&=\{c_1,c_2,\dots,c_M\}
\end{align*}
\begin{center}
The structure of those blocks is presented on the figure below:
\begin{tikzpicture}[scale=1.5,>=stealth,
every node/.style={align=center,scale=.8}] 
\draw[line width=1pt] (.6,0)--(7.4,0);

\draw (1,.1)--(1,-.1) node[below]{$a_{i-1}$};
\draw (1.5,.1)--(1.5,-.1) node[above=3mm]{{$a_{i-1}+\eta$}};
\draw[B] (1.5,.1) -- node[above=5mm] {$E^i_m$} (4.5,.1);
\draw (2.5,.1)--(2.5,-.1) node [above=3mm]{$c_i-\eta$};
\draw (3,.1)--(3,-.1) node[below]{$c_i$};
\draw (3.5,.1)--(3.5,-.1) node [above=3mm]{$c_i+\eta$};
\draw[C] (2.5,-.1) -- node[below=5mm] {$F^i_m$} (3.5,-.1);
\draw (4.5,.1)--(4.5,-.1) node[above=3mm]{{$a_i-\eta$}};
\draw (5,.1)--(5,-.1) node[below]{$a_i$};
\draw (5.5,.1)--(5.5,-.1) node[above=3mm]{{$a_i +\eta$}};
\draw[B] (4.5,.1) -- node[above=5mm] {$H^i_m$} (5.5,.1);
\draw (7,.1)--(7,-.1) node[below]{$c_{i+1}$};
\end{tikzpicture}
\end{center}    
The sets defined above have the following properties:
\begin{enumerate}
\item the intervals $E^i_m$ and $H^i_m$ form together a partition of $[0,1]$,
\item $F^i_m\subset E^i_m$, $D^i_m\subset R^i_m$, $G^i_m\subset L^i_m$,
\item $\diam H^i_m<2^{-m}$ and $\sum_{i=1}^M (\diam H^i_m)^{\frac1m}<2^{-m}$,
\item $\diam F^i_m<2^{-m}$ and $\sum_{i=1}^M (\diam F^i_m)^{\frac1m}<2^{-m}$,
\item if $n\geq 2m(m+1)$ then for every $i$, the intersections $F^i_m\cap G_n$, $F^i_m\cap D_n$ are nonempty and consist of precisely those components of $G_n$ or $D_n$ which have nonempty intersections with $F^i_m$.
\item $L_m\cap R_m=D_m\cap G_m=\emptyset$.
\end{enumerate}

Note that for a fixed $m \in \mathbb{N}$ and every $i = 1,\dots,M$ we have:
$$
\diam E^i_m = \frac1M-2\eta =%\frac{2^{m(m+1)}-2}{2^{2m(m+1)}} 
\frac{2^{2m^2+m}-2}{2^{2m(m+1)}}
\quad \text{ and }\quad \diam H^i_m = 2\eta = \frac{2}{2^{2m(m+1)}}.
$$
From the definition it is clear that $\diam E^i_m+\diam H^i_m=\frac{1}{M}$.
\medskip

We now state the technical theorem underlying the construction. It provides the generic framework from which the main results will be derived.

\begin{thm}\label{thm:residual}
There exists a residual set $G\subset C([0,1])$ such that, for every $g\in G$, there exists a set $Z(g)\subset[0,1]$ satisfying: 
\begin{enumerate}[(a)]
\item the set $Z(g)$ has full Lebesgue measure, \label{thm:a}
\item\label{thm:a'} the set $g^{-1}(Z(g))$ has zero Lebesgue measure
\item $\omega(x)$ is a solenoid, for every $x\in Z(g)$, \label{thm:b}
\item the basin of attraction $B(\omega(x))$ of every $x\in Z(g)$, has zero Lebesgue measure. \label{thm:c}
\end{enumerate}
Moreover, the set $A(g)=\overline{\bigcup_{x \in Z(g)}\{\omega(x)\}}$ has the following properties:
\begin{enumerate}[resume*]
\item $A(g)$ has zero Hausdorff dimension, \label{thm:d}
\item $A(g)=\overline{\Per(g)}=\Omega(g)$ and consequently the basin of attraction $B(A(g))=[0,1]$,\label{thm:e}
\item every open set $U$ with $U \cap A(g) \neq \emptyset$ is essential. \label{thm:g}
\end{enumerate}
\end{thm}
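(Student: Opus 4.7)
The plan is to realize $G$ as a countable intersection $\bigcap_{m\ge 1}\mathcal{U}_m$ of open dense subsets of $C([0,1])$, where $\mathcal{U}_m$ consists of maps admitting a \emph{level-$m$ cycle}: a family of pairwise disjoint closed intervals $\{J_1,\ldots,J_{k_m}\}$ with $k_m\ge 2^m$, strict cyclic invariance $g(J_i)\subset\Int(J_{i+1\bmod k_m})$, diameters $\max_i\diam(J_i)<2^{-m}$, the Hausdorff-dimension estimate $\sum_i(\diam J_i)^{1/m}<2^{-m}$, an absorption condition that the set of points whose orbit eventually enters $\bigcup_i J_i$ has Lebesgue measure at least $1-2^{-m}$, together with a \emph{Cantor-staircase} condition ensuring that $g$ is essentially constant on a union of intervals of total measure $>1-2^{-m}$. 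Openness of $\mathcal{U}_m$ is routine: the strict inclusions are $C^0$-stable, and the measure-theoretic conditions concern orbits entering fixed open sets, hence are lower semicontinuous in the uniform topology.

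The density of $\mathcal{U}_m$ is the main technical content, executed via an explicit perturbation using the block structure $E^i_{m'},F^i_{m'},H^i_{m'}$ from the notation section. Given $f\in C([0,1])$ and $\varepsilon>0$, I would take $m'$ sufficiently large (in terms of the modulus of continuity of $f$ and of $m$) and redefine $f$ block by block so that $g$ maps each plateau $E^i_{m'}$ entirely into some target $F^{\pi(i)}_{m'}\subset E^{\pi(i)}_{m'}$, where $\pi$ is a single cycle of full length $2^{m'}$ chosen to track the coarse behaviour of $f$. The narrow transition blocks $H^i_{m'}$ carry the continuous interpolation, and since $\diam F^i_{m'}=2\eta$ is tiny compared with the continuity scale of $f$, the perturbation satisfies $\|g-f\|_\infty<\varepsilon$. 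The intervals $J_j:=E^{\pi^{j-1}(1)}_{m'}$ then form the required cycle of period $2^{m'}$, the dimension estimate follows from property (4) of the blocks, and the Cantor-staircase condition holds by construction since $g|_{E^i_{m'}}$ has image of diameter $<2\eta$.

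For $g\in G$, I extract at each level $m$ a level-$m$ cycle $\{J^m_i\}$; by compactness together with openness of $\mathcal{U}_{m+1}$, these can be chosen so that $\bigcup_i J^{m+1}_i\subset\bigcup_i J^m_i$, yielding a solenoid $S(g):=\bigcap_m\bigcup_i J^m_i$ in the sense of Section~2. I would then set $Z(g)$ to be the intersection of the absorption sets at each level with the complement of the measure-zero set of plateau-image values. Property (a) follows by summing measures of absorption sets; (a$'$) holds because the Cantor-staircase property forces $g^{-1}$ of any point in $Z(g)$ into the thin transition set $\bigcap_m\bigcup_i H^i_m$, which has measure zero; (b) follows from the nested cycle structure together with Lemma~1 of \cite{Miz}; (c) holds since the cycle at level $m$ has period $k_m\to\infty$, so the basin of any single orbit through $S(g)$ has Lebesgue measure at most $\lambda(J^m_1)\to 0$; (d) follows from the dimension estimate giving $\dim_H S(g)\le 1/m$ for every $m$; (e) follows by Brouwer's theorem applied to $g^{k_m}|_{J^m_1}$, producing a periodic point inside every cycle, combined with the fact that periodic points from successive levels accumulate on all of $S(g)$ and that the diameter control rules out wandering intervals, giving $\overline{\Per(g)}=\Omega(g)=S(g)$; finally (g) holds because any open $U$ intersecting $S(g)$ contains some $J^m_i$, which is visited with asymptotic frequency $1/k_m$ by the positive-measure basin of the level-$m$ cycle, so $U$ is essential.

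The main obstacle is the density step: the explicit perturbation must simultaneously achieve strict cyclic invariance (open but fragile), the Cantor-staircase shape underlying (a$'$), and $C^0$-closeness to an arbitrary $f$. A further subtlety is that to obtain a genuine solenoid rather than merely an isolated cycle at each level, the perturbation at level $m'>m$ must respect the level-$m$ combinatorics so that cycles refine; the block hierarchy $E^i_m,F^i_m,H^i_m$ is designed for exactly this nesting, but the combinatorial choice of cyclic permutations $\pi_m$ at successive levels, and the verification that the absorption and staircase conditions propagate correctly through the refinement, constitute the bulk of the technical work.
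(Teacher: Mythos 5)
Your framework (residual set as a countable intersection of open dense sets, Mizera-style plateau perturbations on the blocks $E^i_{m}$, $F^i_{m}$, $H^i_{m}$, nested cycles of intervals producing solenoids) matches the paper's starting point, but the decision to build a \emph{single} cycle $\pi$ of full length at each level, and hence a single nested solenoid $S(g)=\bigcap_m\bigcup_i J^m_i$, breaks the theorem in two places. First, property (e) cannot hold with $A(g)=S(g)$: since the $J^m_i$ are pairwise disjoint and $k_m\ge 2$, a fixed point lying in some $J^m_i$ would have to lie in the disjoint interval $J^m_{i+1\bmod k_m}$, so $S(g)$ contains no fixed points, whereas $\Omega(g)\supseteq\Fix(g)\neq\emptyset$ for every continuous interval map. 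Thus $S(g)\subsetneq\Omega(g)$ always. The paper's $A(g)$ is the closure of \emph{all} periodic points of $g$, spread over $[0,1]$; to prove $\Per(g)\subseteq A(g)$ it performs a second perturbation $\hat f$ that plants, inside the block structure, a periodic orbit of the same prime period near \emph{every} $K$-periodic orbit of the approximating map $f$, and it combines this with the generically valid shadowing property (via Mizera, Coven--Hedlund and Sharkovsky) to identify $\Omega(g)$ with $\overline{\Per(g)}$. Your Brouwer argument only produces periodic points inside the solenoid and says nothing about the periodic points that necessarily exist outside it.

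Second, property (c) actually fails for the construction as you describe it. If the single solenoid $S(g)$ is minimal --- which an unconstrained choice of one long cycle will typically produce --- then every point of your absorption set has $\omega(x)=S(g)$, so $B(\omega(x))$ contains the absorption set and has measure at least $1-2^{-m}$ for every $m$, the opposite of what (c) asserts. Your justification that the basin is ``at most $\lambda(J^m_1)$'' confuses the basin $\{y:\omega(y)\subseteq\omega(x)\}$, which contains all preimages and need not be confined to the cycle, with the attracting set itself. The paper avoids this through the parity constraint (M1): odd-indexed plateaus are sent into $G_m$ and even-indexed ones into $D_m$, so each level splits the dynamics into disjoint sub-cycles, the full-measure set $Z(g)$ decomposes into uncountably many itinerary classes converging to pairwise distinct solenoids, and the Lebesgue density theorem then rules out any single basin having a density point. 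This multi-solenoid structure is also what makes (g) and the non-quasi-attractor remark of Section~3 work. A smaller quantitative slip: intersecting absorption sets of measure $1-2^{-m}$ only bounds $\lambda(Z(g))$ below by $1-\sum_m 2^{-m}=0$, so even (a) needs either nested absorption sets or a summable sequence of exceptional measures.
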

\begin{proof}
We start the proof with the construction of the set $G$ and consequently the set $Z(g)$ for $g\in G$.

For every $K\geq 1$ let $\mathcal{F}_K\subset C ([0,1])$ be a dense set of piecewise linear maps with slope at least $3$ such that for any $f \in \mathcal{F}_K$ critical points are not $k$-periodic, $f^k(0)\neq 0$, $f^k(1)\neq 1$ for $k\leq K$. For technical convenience we regard each $\mathcal{F}_K$ as a sequence, and write $\mathcal{F}_K = \{f_{j,K}: j\geq 1\}$ for every $K$.

From now on, we proceed with the construction of an open dense set $G_K\subset C([0,1])$, fixing any $K\geq 1$, an arbitrary map $f_{j,K} \in \mathcal{F}_K$, and some fixed $\gamma_{j,K}<\frac{1}{4j}$. For simplicity of notation, we write $f=f_{j,K}$ and $\gamma = \gamma_{j,K}$. All further constants will depend on $f$. While we do not write this dependence directly (e.g. we write $m_K$ instead of $m_K^f$, etc.), the reader should keep in mind that such dependence exists.

Consider the set $\Fix(f^K)$ of $K$-periodic points and by $\Crit(f^K)$ denote the set of critical points for map $f^K$ together with $0$ and $1$. Note that between any two consecutive (with respect to the natural order on the interval) points from $\Fix(f^K)$ there is exactly one point from $\Crit(f^K)$. Moreover, there is at least one and at most two points from $\Crit(f^K)$ in each of the nondegenerate intervals $[0,\min \Fix(f^K)]$ and $[\max \Fix(f^K),1]$. 

Let $\theta_f$ be the maximal slope of $f$. If necessary, we further decrease $\gamma$ so that for every $h \in C([0,1])$ the implication 
\begin{equation}\label{ieq:gamma2}
\|h-f\|_{\infty}<\gamma \Rightarrow d_H(\Fix(h^K),\Fix(f^K))<\frac{1}{4K}
\end{equation}
holds. This adjustment is possible because $f$ is piecewise linear.  
Next, define
$$\beta_K = \frac12 d_H\left(\Fix(f^K), \Crit(f^K)\right)>0.$$

Choose $\alpha_K<\min\{\beta_K,\frac{\gamma}{2},\frac{1}{K}\}$ and $\delta_K<\alpha_K$ with the following properties:
\begin{enumerate}[(i)]
\item $d(x,y)<\delta_K$ implies $d(f^i(x),f^i(y))<\frac{\alpha_K}{4}$ for every $x,y\in [0,1]$, $i=1,\dots,K$,\label{pseodoorb1}
\item for every $i=0,\dots,K$ and every point $y \in \Fix(f^K)$ we have
$$
f^i\left((y-\delta_K,y+\delta_K)\right)\subset \left(f^{ i\mmod{K}}(y)-\alpha_K,f^{ i\mmod{K}}(y)+\alpha_K \right),
$$
\item for every $\delta_K-$pseudo-orbit $\{x_0,\dots, x_{K}\}$ of length $K+1$ we have $$d(f^i(x_0), x_i)<\frac{\alpha_K}{4}, \; i=1,\dots,K.$$ \label{pseudoorb}
\end{enumerate}

We select $m_K>K$ large enough so that the following inequalities hold:
\begin{align}
4(\diam E^i_{m_K}+\diam H^i_{m_K})= \frac{4}{M_K}&<\frac{\delta_K}{4}, \label{ineq:diams}\\
M_K = 2^{m_K}&>\frac{4}{K\alpha_K}, \label{ineq:M_K}\\
\frac{2\theta_f+3}{M_K} &<\frac{\alpha_K}{4}. \label{ineq:theta}
\end{align}
The condition \eqref{ineq:diams} ensures that 
$\diam(E^i_{m_K})<\delta_K$ for all $i$, so property \ref{pseodoorb1} applies on each $E^i_{m_K}$, 
and, for every $y \in \Fix(f^K)$, the $\delta_K-$neighborhood $(y-\delta_K, y+\delta_K)$ contains at least four consecutive sets from $\{E^i_{m_K}\}_{i=1}^{M_K}$.

We now construct a perturbation $\tilde f$ of $f$ following the approach of \cite[Theorem~1]{Miz}, with adjustments near periodic points in order to produce disjoint periodic orbits absorbed by the attracting set $F_{m_K}$. Define the map $\tilde f : [0,1] \to [0,1]$ by the following rules:
\begin{enumerate}[(M1)]
\item\label{con:M1} $\tf(c_i)$ is the closest to $f(c_i)$ element of $\begin{cases} D_{m_K}\cap C_{m_K}&, \text{ if }c_i \in D_{m_K}\\ G_{m_K}\cap C_{m_K}&, \text{ if }c_i \in G_{m_K}\end{cases}$,
\item $\tf(x) = \tf(c_i)$ for every $x \in E^i_{m_K}$, \label{def:M2}
\item $\tf$ is linear on every $H^i_{m_K}$ and $\tf$ is continuous,
\item $\tf(0), \tf(1)$ are respectively the closest to $f(0)$ and $f(1)$ elements of $C_{m_K}$.
\end{enumerate}
Note that the map $\tilde f$ obeying the rules (M1)-(M4) is uniquely defined, up to possible choice in (M1) between points with equal distance to $f(c_i)$.

We will now show that $\|f-\tf\|_{\infty}\leq \frac{\alpha_K}{4}$. Observe that since $d(c_i,c_{i-1})=\frac{1}{M_K}$ and points of $C_{m_K}$ with fixed parity are spaced by $\frac{2}{M_K}$, we obtain
$$
d(f(c_i),\tilde f(c_i)) \le \frac{1}{M_K}.
$$

Now, for every $x \in E^i_{m_K}$ we have $d(x,c_i)<\frac{1}{2M_K}$ so $d(f(x),f(c_i))\leq  \frac{\theta_f}{2M_K}$.
Hence, by \eqref{ineq:theta}, for every $x \in E^i_{m_K}$ we obtain
$$
d(f(x),\tilde f(x))
\le d(f(x),f(c_i)) + d(f(c_i),\tilde f(c_i))=\frac{\theta_f+2}{2M_K}\le \frac{\alpha_K}{4}.
$$

Similarly, for every $x \in H^i_{m_K}$ we have 
$x\in [a_{i}-\eta,a_{i}+\eta]\subset (c_i,c_{i+1})$ 
and 
$d(c_i,c_{i+1})=\frac{1}{M_K}$.
As $\tilde f$ is constant on each $E^i_{m_K}$ and $E^{i+1}_{m_K}$ and linear on $H^i_{m_K}$, we obtain
$$
\diam(\tilde f(H^i_{m_K})) 
= d(\tilde f(a_{i}-\eta),\tilde f(a_{i}+\eta))
= d(\tilde f(c_i),\tilde f(c_{i+1})).
$$
Using \ref{con:M1}, we have 
$$
d(\tilde f(c_{i+1}),\tilde f(c_i))
\le d(f(c_{i+1}),f(c_i)) 
+ d(f(c_{i+1}),\tilde f(c_{i+1})) 
+ d(f(c_i),\tilde f(c_i))
\le \frac{\theta_f+2}{M_K}.
$$
Thus,
$$
d(\tilde f(x),\tilde f(c_i)) 
\le \diam(\tilde f(H^i_{m_K})) 
\le \frac{\theta_f+2}{M_K},
$$
while 
$d(x, c_i)<\frac{1}{M_K}$ implies $d(f(x),f(c_i))\le \frac{\theta_f}{M_K}$.
Therefore,
$$
d(f(x),\tilde f(x)) 
\le \frac{\theta_f}{M_K} + \frac{1}{M_K} + \frac{\theta_f+2}{M_K}
= \frac{2\theta_f+3}{M_K}.
$$
As a consequence, by \eqref{ineq:theta},
$$
d(f(x),\tilde f(x)) \le \frac{2\theta_f+3}{M_K} \le \frac{\alpha_K}{4}.
$$

Therefore, choosing $m_K$ large enough, we obtain
\begin{equation}
\|f-\tilde f\|_{\infty}\le \frac{\alpha_K}{4}.\label{eq:tf}
\end{equation}

Now we introduce a further perturbation on $\tf$ to obtain the map $\hat{f}$ with the property that $K-$periodic orbits of $f$ are traced by $K-$periodic points of $\hat{f}$ located in $C_{m_k}$.

For any $y \in \Fix(f^K)$ take $i_0 \in \{1,\dots,M_K\}$ such that $\dist(E^{i_0}_{m_K},y)$ is the smallest among all sets from $\{E^{i_0}_{m_K}\}_{i=1}^{M_K}$ contained in $(y-\delta_K, y+\delta_K)$. Let $p$ be the prime period of $y$ and let $i_1,\dots,i_{p-1} \in \{1,\dots,M_K\}$ be pairwise different and such that 
$$f(E^{i_{j}}_{m_K})\subset E^{i_{j+1}}
_{m_k}, j=1,\dots,p-2.$$
Such indices $i_j$ exist by the definition of $\alpha_K$ and the structure of the blocks from $\{E^i_{m_K}\}_{i=1}^{M_K}$. It follows that points $\{c_{i_0},\dots,c_{i_{p-1}}\}$ form a finite portion of the $\tf-$orbit of point $c_{i_0}$. Since $\|f-\tilde f\|_\infty \le \alpha_K/4 \le \delta_K$, the sequence
$x_0=c_{i_0},\ x_{j+1}:=\tilde f(x_j)$ is a $\delta_K$–pseudo-orbit of $f$.
Recall that for map $f$ every $\delta_K-$pseudo-orbit of length $K+1$ is $\frac{\alpha_K}{4}-$traced. In particular, as $d(c_{i_0},y)<\frac{1}{M_K}<\delta_K$ we have, by \ref{pseudoorb} and \ref{pseodoorb1}, 
\begin{align*}
d(f^{K}(c_{i_0}), \tf^{{K}}(c_{i_0})) & < \frac{\alpha_K}{4},\\
d(f^K(c_{i_0}),y)&<\frac{\alpha_K}{4},
\end{align*}
which implies $d(\tf^K(c_{i_0}),y)<\frac{\alpha_K}{2}$ and, altogether, we obtain
$$
d(\tf^K(c_{i_0}), c_{i_0})<\frac{\alpha_K}{2}+\frac{1}{M_K}
$$

Now define $\hat f$ by
$$
\hat f(x)=
\begin{cases}
\tilde f(x), & x\notin H^{i_{p-2}}_{m_K}\cup E^{i_{p-1}}_{m_K}\cup H^{i_{p-1}}_{m_K},\\[4pt]
c_{i_0}, & x\in E^{i_{p-1}}_{m_K},
\end{cases}
$$
and interpolate linearly on 
$$
H^{i_{p-2}}_{m_K}\cup H^{i_{p-1}}_{m_K}
$$
so that $\hat f$ remains continuous.
(If $p=1$, the interval $H^{i_{p-2}}_{m_K}$ is omitted.)

By construction, we have $\|\tilde f - \hat f\|_{\infty}\le \frac{\alpha_K}{2} + \frac{1}{M_K}$ on $E^{i_{p-1}}_{m_K}$. 
Since $H^{i_{p-1}}_{m_K}$ and $H^{i_{p-2}}_{m_K}$ each share an endpoint with $E^{i_{p-1}}_{m_K}$, and at their opposite endpoints $\tilde f$ and $\hat f$ coincide, the linear interpolation on these intervals preserves the same bound. 
Therefore, the estimate holds on $E^{i_{p-1}}_{m_K}\cup H^{i_{p-2}}_{m_K}\cup H^{i_{p-1}}_{m_K}$, and hence on the whole interval $[0,1]$, since $\hat f=\tilde f$ elsewhere.

Observe that the above estimation, together with \eqref{eq:tf} and \eqref{ineq:M_K}, implies
$$
\|f-\hat{f}\|_{\infty}\leq \|f-\tf\|_{\infty}+\|\tf-\hat{f}\|_{\infty}\leq \frac{\alpha_K}{4}+\left(\frac{\alpha_K}{2}+\frac{1}{M_K}\right)\leq \alpha_K<\frac{\gamma}{{2}}.
$$
The point $c_{i_0}$ is a periodic point of $\hat f$ whose prime period $p$ is the
same as the prime period of $y$ (in particular, $p$ divides $K$), and therefore
$c_{i_0}\in C_{m_K}\cap \Fix(\hat f^{\,K})$. Hence, for every $y\in\Fix(f^{K})$ we obtain
a periodic point $x\in C_{m_K}\cap\Fix(\hat f^{\,K})$ whose prime period coincides with
that of $y$. Moreover, if two points of $\Fix(f^{K})$ have disjoint $f$–orbits, then the
corresponding periodic $\hat f$–orbits constructed in $C_{m_K}$ are also disjoint.

At this stage, we have shown how to construct a perturbation $\hf$ of any map $f \in \mathcal{F}_K$ for all $K\geq 1$. We now proceed to define the residual set of functions $G$.

For every $K\geq 1$ choose
any $f \in \mathcal{F}_K$ and take 
\begin{equation}\label{def:rho}
\rho_{f,K}<\min\{\eta_{m_K},\frac12\diam E^i_{m_K}\}.
\end{equation}
Observe that by \eqref{ineq:diams} and the fact that $\delta_K<\alpha_K<\frac{\gamma}{{2}}$ we have $\rho_{f,K}<\frac{\gamma}{4}$. Moreover, by the choice of $\rho_{f,K}$ the following holds: if $ g \in B(\hf,\rho_{f,K})$ and $c \in C_{m_K}\cap \Fix(\hf^K)\cap E^{i_c}_{m_K}$ for some $i_c \in \{1,\dots,M_K\}$ then 
\begin{equation}\label{eq:fixg}
\Fix(g^K)\cap E^{i_c}_{m_K}\neq\emptyset.
\end{equation}
In fact, we have that 
$g^K(E_{m_K}^{i_c})\subset F_{m_K}^{i_c}$ for every such $g$.

Define
$$
G_K = \bigcup_{f \in \mathcal{F}_K}B(\hat{f},\rho_{f,K})$$ 
and
$$
G = \bigcap_{K\geq 1}G_K.
$$
Each set $G_K$ is open by definition, it is dense in $C([0,1])$ because $\mathcal{F}_K$ is dense 
 and 
$$
\lim_{j\to \infty}\|f_{j,K}-\hat{f}_{j,K}\|_{\infty}\leq \lim_{j\to\infty }\gamma_{j,K}=0.
$$ 
In particular, $G$ as a countable intersection of dense open sets is a residual subset of $C([0,1])$.

By \cite[Theorem 4]{Miz}, there is a residual set $E$ in $C([0,1])$ such that every $f\in E$ has the shadowing property. Intersection of residual sets remains residual, hence replacing $G$ with the set $G\cap E$
when necessary, we may assume that every map in $G$ has the shadowing property.
By Coven and Hedlund \cite{CoHed} or Sharkovsky \cite{Shark2}, we have, for any interval map, $\overline{\Per(f)}=\overline{\Rec(f)}$. It is well known (e.g. see \cite{Oprocha}) that $\overline{\Rec(f)}=\Omega(f)$ whenever $f$ has the shadowing property.
Combining these facts,  we have 
\begin{equation}\label{eq:omega}
\Omega(g)=\overline{\Per(g)} \quad \text{ for every } g\in G.
\end{equation}

Fix any $g \in G$ and any $K \geq 1$. By the definition of $G$, there exists a map $f_{j,K} \in \mathcal{F}_K$ with $j\geq 1$ such that 
$$g \in B(\hat{f}_{j,K},\rho_{f_{j,K}}).$$
Then the following set is nonempty
and contains at least one element from each $\mathcal{F}_K$:
$$
\mathcal{B}_g = \left\{f \in \bigcup_{K\geq 1}\mathcal{F}_K: \; g \in B(\hf,\rho_{f,K})\right\}.
$$ 

For every $f \in \mathcal{B}_g$ there exists some $K_f \geq 1$ such that $f \in \mathcal{F}_{K_f},$ 
and the corresponding $m_{K_f}$ is chosen as in the above construction so that the partition
$$
\{E^i_{m_{K_f}}\}_{i=1}^{M_{K_f}} \;\cup\; \{H^i_{m_{K_f}}\}_{i=1}^{M_{K_f}}
$$
of $[0,1]$ satisfies conditions \eqref{ineq:diams}–\eqref{ineq:theta}.

Since $g \in G$, for every $K\geq 1$ there exists some $f \in \mathcal{B}_g$ such that $K_f>K$.
We now define
$$
Z(g) = \bigcap_{K \geq 1} \;\; \bigcup_{\{ f \in \mathcal{B}_g : \; K_f \geq K \}} E_{m_{K_f}},
\qquad 
A(g) = \overline{\bigcup_{x \in Z(g)} \omega(x)}.
$$
The set $Z(g)$ is obtained as the intersection of unions of the sets $E_{m_{K_f}}$ at arbitrarily fine levels.  
Since $\lambda(\bigcup_{j\geq 0} E_{m_j})=1$
for every increasing sequence $m_j$,
it follows that $Z(g)$ has full Lebesgue measure proving \ref{thm:a}.

Since $g(E_m)\subset F_m$ and $\lambda(F_m)\to 0$, we have $\lambda(g(Z(g)))=0$.  Replacing $Z(g)$ by $Z'(g)=Z(g)\setminus g(Z(g))$, we still have $\lambda(Z'(g))=1$, and
$$
g^{-1}(Z'(g))\subset [0,1]\setminus Z(g),
$$
so $\lambda(g^{-1}(Z'(g)))=0$ because $\lambda([0,1]\setminus Z(g))=0$, proving \ref{thm:a'}.

To see that $\omega(x)$ is a solenoid and that $B(\omega(x))$ has zero Lebesgue measure for every $x \in Z(g)$, one can follow the lines of the proof of \cite[Theorem~1]{Miz}.

For $x\in Z(g)$, choose an increasing sequence $m_j\to\infty$ such that $x\in E_{m_j}$. Since $$g(E_{m_j})\subset F_{m_j}\subset E_{m_j},
$$
we have $g^n(x)\in F_{m_j}$ for every $n\ge1$ and every $j$. Hence, for every $r\ge1$,
$$
g^n(x)\in Q_r:=\bigcap_{j=1}^rF_{m_j}
\qquad\text{for every }n\ge1.
$$
For each $r$, choose a cycle of interval components
$\{I_{i,j_r}\}_i$ of $Q_r$ that is visited infinitely often. These cycles may
be chosen so that
$$
g(I_{i,j_s})\subset \Int I_{i,j_{s+1}}
$$
for every $s$. Consequently,
$$
\omega(x)=\bigcap_{s\ge1}\bigcup_i I_{i,j_s}.
$$
This shows that $\omega(x)$ is a solenoid for every $x \in Z(g)$ proving \ref{thm:b}.

To prove property~\ref{thm:c}, fix any $x\in Z(g)$. Observe that $\omega(x)$ is fully contained either in $\bigcap_{s\geq 1}D_{j_s}$ or in $\bigcap_{s\geq 1}G_{j_s}$, for the sequence $\{j_s\}_{s\geq 0}$ chosen above. This implies that either $B(\omega(x))\subset [0,1]\setminus L_{j_s}$ or, respectively, $B(\omega(x))\subset [0,1]\setminus R_{j_s}$ for every $s\geq 1$, since points from $L_{j_s}$ and $R_{j_s}$ have disjoint itineraries and their orbits cannot converge to the same solenoid. Assume, on the contrary, that $\lambda(B(\omega(x)))>0$. 
By the Lebesgue density theorem, there exists a nondegenerate interval $[a,b]$ such that 
$$\frac{\lambda\left([a,b]\cap B(\omega(x))\right)}{(b-a)}>\frac78.$$ 
Since for large $m$ the sets $L_m$ and $R_m$ both meet every interval of positive measure, contributing approximately the same proportion of its measure, and $\lambda(F_m\cup H_m)\to 0$, we have
$$\lambda(L_m\cap B(\omega(x)))>\frac{b-a}{8} \quad \text{ and } \quad\lambda(R_m\cap B(\omega(x)))>\frac{b-a}{8}.$$ 
Since $\omega$-limit sets of points from $R_m$
and $L_m$ are disjoint, it is a contradiction.

To prove \ref{thm:d}, note that
at each construction level $m=m_{K_f}$ one has $g(E_m)\subset F_m\subset E_m$, so $E_m$ is forward invariant and, for every $x\in Z(g)$, the orbit of $x$ eventually lies in $\bigcup_{i}F^i_m$.
Therefore
at
 arbitrarily large construction levels $m$ we have
$$
A(g) \subset \bigcup_{i=1}^{M} F_m^i,
\qquad M=2^m,
\qquad \diam(F_m^i)=2\eta_m=2^{\,1-2m(m+1)}.
$$

Thus, for any $s>0$, and $m$ from these levels
$$
\mathcal{H}^s_\infty(A(g))
\le \sum_{i=1}^{M} (\diam F_m^i)^s
= M \cdot (2^{\,1-2m(m+1)})^s
= 2^{\,m + s - 2s\,m(m+1)} \xrightarrow[m\to\infty]{} 0.
$$
Hence $\mathcal{H}^s(A(g))=0$ for every $s>0$, and therefore $\dim_H(A(g))=0$.

Next, we are going to show that for an arbitrary $g \in G$ the attractor $A(g)$ coincides with the closure of its periodic points, that is, $A(g) = \overline{\Per(g)}$ and that \ref{thm:e} holds.
Fix $\varepsilon \in (0,\tfrac14)$.  
For any $g \in G$ and any $z_0 \in \Per(g)$ with prime period $p$, choose $K_0 > p$, a multiple of $p$, such that $\frac{1}{K_0} < \frac{\varepsilon}{2}.$ Since $M_{K_0} > K_0$, we also have $\frac{1}{M_{K_0}} < \frac{\varepsilon}{2}.$ Finally, there exists some $f \in \mathcal{F}_{K_0} \cap \mathcal{B}_g$ such that $g \in B(\hat f,\rho_{f,K_0}).$

Since $\|g-\hat f\|_\infty<\rho_{f,K_0}<\tfrac{\gamma}{4}$ and $\|\hat f-f\|_\infty\le\alpha_{K_0}<\tfrac{\gamma}{2}$,
we obtain $\|g-f\|_\infty<\gamma$.
As $z_0 \in \Fix(g^{K_0})$, by \eqref{ieq:gamma2} there exists a point 
$$
x \in \Fix(f^{K_0}) \quad \text{with} \quad d(x,z_0)<\tfrac{1}{4K_0}.
$$

Moreover, there exists an index  $i_{K_0} \in \{1,\dots,M_{K_0}\}$ and
$$
\hat{x}_0 \in \Fix(\hat f^{K_0}) \cap E^{i_{K_0}}_{m_{K_0}}
$$
such that
$$
d(\hat{x}_0,z_0)<\delta_{K_0}+\tfrac{1}{4K_0}.
$$
Note that by the definition of $\hat f$ we have $\hat{x}_0\in C_{m_{K_0}}.$ By \eqref{eq:fixg} there exists
$$
z_1 \in \Fix(g^{K_0}) \cap \Int E^{i_{K_0}}_{m_{K_0}},
$$
and therefore
\begin{eqnarray*}
d(z_0,z_1) 
 &<& d(z_0,\hat{x}_0)+d(\hat{x}_0,z_1) 
 < \delta_{K_0}+\tfrac{1}{4K_0}+\diam E^{i_{K_0}}_{m_{K_0}}\\
 &<& \delta_{K_0}+\tfrac{1}{M_{K_0}}+\tfrac{1}{4K_0}
 < \tfrac{3}{K_0}.
\end{eqnarray*}

Now we repeat the reasoning for the point $z_1$ in place of $z_0$, choosing 
$K_1 > K_0$, a multiple of $K_0$, such that $\frac{1}{K_1}<\frac{\varepsilon}{4}$.

By the same arguments for the partition determined by $m_{K_1}$ (and possibly taking $K_1$ sufficiently large so that the corresponding interval is contained in the previous one) we find an index $i_{K_1}\in\{1,\dots,M_{K_1}\}$ and a subinterval 
$E^{i_{K_1}}_{m_{K_1}} \subset E^{i_{K_0}}_{m_{K_0}}$ 
containing a point
$$
\hat{x}_1 \in \Fix(\hat f^{K_1}) \cap E^{i_{K_1}}_{m_{K_1}} \cap C_{m_{K_1}}
$$
and consequently, there also exists
$$
z_2 \in \Fix(g^{K_1}) \cap \Int E^{i_{K_1}}_{m_{K_1}}
$$
with $d(z_1,z_2)<\tfrac{3}{K_1}$.
 Clearly, we can also require that $\tfrac{1}{M_{K_1}}<\tfrac{\varepsilon}{4}$.

Iterating this process, we obtain a sequence of periodic points 
$\{z_j\}_{j\geq 0}$ with increasing periods, contained in a descending sequence of intervals with shrinking diameters (namely $z_j \in E^{i_{K_{j-1}}}_{m_{K_{j-1}}}$ for $j\geq 1$), and satisfying
$$
d(z_j,z_{j+1})<\tfrac{3}{K_j}, 
\qquad \text{with } \tfrac{1}{K_j}<\tfrac{\varepsilon}{2^{j+1}}.
$$
Thus $\{z_j\}_{j\geq 0}$ converges to some
$$
z = \lim_{j\to\infty} z_j.
$$
By the construction, since each $E^{i_{K_j}}_{m_{K_j}}$ is closed, we have $z \in E^{i_{K_j}}_{m_{K_j}}$ for all $j\geq 1$, hence
$$
z \in \bigcup_{\{f \in \mathcal{B}_g:\,K_f\geq K_0\}} E_{m_{K_f}}.
$$
Together with $K_j\to\infty$, this implies $z \in Z(g)$.  

Moreover,
$$
d(z_0,z) \leq \sum_{j=1}^\infty \tfrac{3}{K_j} < \varepsilon.
$$
For every $j\geq 0$, let
$
J_j=E^{i_{K_j}}_{m_{K_j}}.
$
By construction, the intervals $J_j$ form a decreasing sequence containing $z$,
their diameters tend to zero, and
$$
g^{K_j}(J_j)\subset F^{i_{K_j}}_{m_{K_j}}\subset J_j.
$$
Consequently,
$
z,\ g^{K_j}(z)\in J_j,
$
and hence
\[
d\bigl(g^{K_j}(z),z\bigr)
\leq \operatorname{diam}(J_j)\longrightarrow 0.
\]
Thus $z$ is recurrent. Since $z\in Z(g)$, we have
\[
z\in\omega(z)\subset
\bigcup_{x\in Z(g)}\omega(x)\subset A(g).
\]
For every $\varepsilon>0$ the above construction therefore produces a point
$z\in A(g)$ satisfying $d(z_0,z)<\varepsilon$. Since $A(g)$ is closed, it follows
that $z_0\in A(g)$. As $z_0$ was an arbitrary periodic point of $g$, we conclude
that
$
\Per(g)\subset A(g),
$
and consequently
$\overline{\Per(g)}\subset A(g).$

Since $g\in G$, by \eqref{eq:omega} we have $\Omega(g)=\overline{\Per(g)}$. For every
$x\in Z(g)$ we thus obtain $\omega(x)\subset \Omega(g)=\overline{\Per(g)}$, hence
$A(g)=\overline{\bigcup_{x\in Z(g)}\omega(x)}\subset \overline{\Per(g)}$. Therefore
$A(g)=\overline{\Per(g)}$, establishing \ref{thm:e}. Since $\Omega(g)=A(g)$, every
orbit satisfies $\omega(x)\subset A(g)$, and thus the basin of attraction is the whole interval:
$B(A(g))=[0,1]$.

To complete the proof of the theorem, fix any open set $U$ such that
$U \cap A(g) \neq \emptyset$. 
Pick a point $z \in U \cap A(g)$. 
Since $\diam(F^i_m) \to 0$ as $m \to \infty$, there exists $m$ large enough and an index $i$ such that
$$
z \in F^i_m \subset U.
$$

By the construction of $\hat f$, such $F^i_m$ contains a $K$--periodic orbit. 
Because $g \in G$ is a small perturbation of $\hat f$, relation~\eqref{eq:fixg} guarantees that 
$g$ also has a $K$--periodic point in $F^i_m$. 
Therefore, any orbit that enters $F^i_m$ returns to $F^i_m$ infinitely many times, with gaps bounded by $K$. Since $F^i_m \subset U$ and for every $x \in F^i_m$ we have $\omega(x)\cap F^i_m \neq \emptyset$, 
it follows that the set $\{x \in [0,1] : \omega(x)\cap U \neq \emptyset\}$ contains $F^i_m$ and, therefore, has positive Lebesgue measure.

Hence, if we take $\varepsilon = 1/{2K}$ and $\delta = \frac12\lambda(F^i_m)$, then along any sequence $k_n \to \infty$ with $k_0>4K$,
$$
\lambda\!\left(\Bigl\{ x \in [0,1] : 
\frac{1}{k_n}\sum_{j=0}^{k_n-1}\chi_U(g^j(x)) > \varepsilon
\Bigr\}\right)  \geq \lambda(F_m^i)> \delta.
$$
This shows that $U$ is an essential set, proving \ref{thm:g} and completing the proof of the theorem.
\end{proof}

The next theorem contrasts the generic $C^0$ picture obtained here with the finitude scenario proposed by Palis for smooth parameter families~\cite{Palis2}. It shows that generic interval maps possess a canonical observable attractor, but no Palis attractor.

\begin{thm}\label{thm:generic_attr}
There exists a residual family $G\subset C([0,1])$ such that, for every
$g\in G$, the global Milnor, statistical, and physical attractors coincide
with the non-wandering set $\Omega(g)$, which is a Cantor set. Moreover, the
global Milnor attractor is robust but not Lyapunov stable. Every invariant
probability measure of $g$ has a basin of zero Lebesgue measure. Consequently,
$g$ admits no Palis attractor. Finally, the only asymptotically stable sets of
$g$ are finite unions of invariant nondegenerate periodic intervals, including $[0,1]$ itself.
\end{thm}

\begin{proof}
Let $G$ be the set from Theorem \ref{thm:residual}. For any $g\in G$, let $\Amin(g)$, $\Astat(g)$ and $\Lambda(g)$ denote the physical, statistical, and global Milnor attractor of $g$, respectively. We first  claim that $\Amin(g)=\Astat(g)=\Lambda(g)=A(g)=\Omega(g).$ By Lemma \ref{lem:attr} we have  $\Amin(g)\subseteq \Astat(g)\subseteq\Lambda(g)\subseteq \Omega(g)$ and so by Theorem \ref{thm:residual}\ref{thm:e}, it is enough to show that $A(g)\subseteq \Amin(g)$. 
Suppose instead that $A(g) \setminus \Amin(g) \neq \emptyset$. Then there exists an open set $U$ with $U\cap A(g)\neq\emptyset$ and $U\cap \Amin(g)=\emptyset$. By Theorem \ref{thm:residual}\ref{thm:g}, $U$ is essential, contradicting the definition of $\Amin(g)$. Indeed, $A(g)\subseteq \Amin(g)$ and so the claim holds.

It is well known that a set $A$ is asymptotically stable for $g$ if there exists a neighborhood $U\supset A$ with $g(\overline U)\subset U$ and $A=\bigcap_{n\ge0} g^n(\overline U)$. For interval maps, this implies that $A$ is always a finite union of (possibly degenerate) closed intervals, since one can start with a finite trapping neighborhood and the number of components cannot increase under iteration. 
Moreover, the finitely many components of $A$ are eventually permuted by $g$,
and hence $A$ is a finite union of periodic interval cycles. However, in our case, none of these
components is degenerate since every trapping component contains points whose
$\omega$-limit sets are solenoids and therefore have an odometer as a factor.

When $g\in G$, every periodic point is approximated by periodic points of arbitrarily high period (see \cite{Miz}), so $A$ cannot contain isolated points, which shows that $\Omega(g)$ is a Cantor set. In particular, $\Omega(g)$ is not asymptotically stable. Since $B(\Omega(g))=[0,1]$, Lyapunov stability of $\Omega(g)$ would imply asymptotic stability. Hence $\Omega(g)$ is not Lyapunov stable.

We now establish the robustness of the global Milnor attractor at $g$.
Let $\Theta(f) = \bigcup_{x\in X} \omega(x)$ denote the union of all $\omega$--limit sets of a function $f$, and let $\CR(f)$ be its chain recurrent set. 
Every point of an $\omega$--limit set is chain recurrent, hence $\Theta(f) \subseteq \CR(f)$, and since $\CR(f)$ is closed we also have $\overline{\Theta(f)} \subseteq \CR(f)$. 
It follows that $\Lambda(f) \subseteq \overline{\Theta(f)} \subseteq \CR(f)$.
By Theorem \ref{thm:residual}, for any $g\in G$
we have $\Lambda(g)=\Omega(g)$
and clearly also $\Omega(g)=\CR(g)=\Rec(g)$ since we know that $g$ has the shadowing property (e.g. see the proof of Theorem~3.4.2 in \cite{Aoki}, cf.  \cite{Oprocha}). Now let $g_n \to g$ in the uniform topology on $C([0,1])$. Since the chain recurrent set varies upper semi--continuously under uniform convergence (see Akin \cite[Theorem~7.23]{Akin}), 
$$
\limsup_{n \to \infty} \Lambda(g_n) \subseteq \limsup_{n \to \infty} \CR(g_n) \subseteq \CR(g) = \Lambda(g).
$$ 
By Lemma~\ref{lem:USC} we also have lower semi--continuity, namely $\Lambda(g) \subseteq \liminf_{n \to \infty} \Lambda(g_n)$. 
Therefore $d_H(\Lambda(g_n),\Lambda(g))\to0$, so the global Milnor attractor is robust at $g$.

Suppose that an invariant %ergodic 
probability measure $\mu$ has a
basin $B_\mu$ of positive Lebesgue measure. Since $Z(g)$ has full measure,
$B_\mu\cap Z(g)$ has positive measure. Fix $x\in B_\mu\cap Z(g)$. For every
$y\in B_\mu\cap Z(g)$, convergence of the empirical measures to $\mu$ gives
$\sigma(y)=\operatorname{supp}\mu$. Since $\omega(y)$ is a solenoid,
Lemma~\ref{lem:solenoid} yields $\sigma(y)=\omega(y)$. Hence
$\omega(y)=\operatorname{supp}\mu=\omega(x)$, and therefore
$$
B_\mu\cap Z(g)\subseteq B(\omega(x)).
$$
This contradicts Theorem~\ref{thm:residual}\ref{thm:c}, which gives
$\lambda(B(\omega(x)))=0$. Thus every invariant %ergodic 
probability measure
has a basin of zero Lebesgue measure, and consequently $g$ admits no Palis
attractor.
\end{proof}
\begin{proof}[Proof of Theorem A and Theorem B]
It is an immediate consequence of Theorem~\ref{thm:generic_attr}.
\end{proof}

\begin{rem}
By Theorem~\ref{thm:residual}\ref{thm:a} and \ref{thm:a'}, the map $g$ is totally singular:
$$
\lambda(Z(g))=1 \quad\text{and}\quad \lambda(g^{-1}(Z(g)))=0.
$$
Moreover, by Theorem~\ref{thm:residual}\ref{thm:c}, the basin $B(\omega(x))$ has zero Lebesgue measure for every $x\in Z(g)$, hence, $g$ admits no SRB (physical) invariant probability measure. 
Therefore, $g$ is \emph{weird} in the terminology of \cite{AA2013}.
\end{rem}

Let $f : M \to M$ be a continuous map on a compact metric space $M$. 
A compact invariant set $\Lambda \subset M$ is called a \emph{quasi-attractor} if 
there exists a decreasing sequence of attracting regions $(U_n)_{n\ge 0}$, i.e.
$$
f(\overline{U_n}) \subset \operatorname{int}(U_n) \quad\text{for all } n,
$$
such that
$$
\Lambda = \bigcap_{n\ge 0} \bigcap_{k\ge 0} f^k(U_n)
       = \bigcap_{n\ge 0} f^n(U_n).
$$
Equivalently, $\Lambda$ is a quasi-attractor if it is compact, invariant, 
\emph{chain transitive}, and there exists an attracting region $U$ with 
$\Lambda \subset \bigcap_{n\ge 0} f^n(U)$. 

\begin{rem}
Suppose that $\Lambda$ is a set with attracting region $U$ for a typical map $g\in G$. Then by Theorem~\ref{thm:residual} there are $x,y\in \Lambda$ such that $\omega(x),\omega(y)$ are distinct solenoids. Points in different solenoids have incompatible itineraries, 
so they belong to different chain recurrent classes. Thus $\Lambda$ is not chain transitive, and consequently, is not a quasi-attractor.
\end{rem}

\begin{exmp} To illustrate the existence of local asymptotically stable sets in $G$, consider an interval map $f$ with an attracting periodic point $p$. Then there exists an open neighborhood $U$ of $\Orb_f(p)$ such that
$f(\overline U)\subset U$. Now take the set $G$ from Theorem~\ref{thm:residual}. By density, there is $g\in G$ sufficiently close to $f$ such that $g(\overline U)\subset U$, and hence $A=\bigcap_{n\ge0} g^n(\overline U)$ is asymptotically stable. This shows that, although the global attractor $\Lambda(g)=\Omega(g)$ is never asymptotically stable, maps $g\in G$ can still have local asymptotically stable sets. In fact, by repeating this construction near several attracting periodic points, we see that there is no uniform upper bound on the number of disjoint asymptotically stable sets that a map $g\in G$ can possess.
\end{exmp}

\section{Unobserved, yet important dynamics}\label{sec:maximality-examples}

Theorem~\ref{thm:A} identifies the global Milnor attractor of a typical map with the entire non-wandering set. The aim of this section is to show that this conclusion is substantially stronger than an almost-everywhere description of orbit limits. In particular, solenoidal observable dynamics may coexist with recurrent pieces outside the global Milnor attractor, and the omitted part may carry all of the topological entropy. In fact, all conclusions of Theorem~\ref{thm:A} will hold, except the condition $\Lambda(f)=\Omega(f)$ (see Theorem~\ref{thm:entropy-hidden}).

\begin{exmp}\label{ex:delahaye-hidden-periodic}
Delahaye~\cite{Del} constructed a continuous interval map having an odometer as a maximal $\omega$-limit set and countably many periodic points, all of them repelling. In particular, the odometer is the global Milnor attractor, whereas the additional periodic points are non-wandering and lie outside it. The construction was further refined in~\cite[Example~2.4]{OL} so that the observable part contains uncountably many odometers: at successive stages, each periodic cycle of intervals splits into two smaller attracting cycles, and the limiting dynamics inside each nested chain is an odometer.

For the resulting map $F:[0,1]\to[0,1]$, let $A_F=\Lambda(F)$ denote the global Milnor attractor. Then $A_F$ is formed by the solenoidal limit dynamics detected by almost every orbit, but
\[
A_F\subsetneq \Omega(F),
\]
because the countably many repelling periodic points belong to $\Omega(F)$ and remain outside $A_F$. Moreover, the individual odometers have basins of Lebesgue measure zero: the periodic intervals continue to split at every scale. Thus a system may display the same qualitative mechanism of almost-everywhere solenoidal behavior while the global Milnor attractor still fails to recover the whole non-wandering set.

We shall use such a map $F$ as a building block in the next result. We may assume, after the standard endpoint modification illustrated in Figure~\ref{fig:Cantor-odo}, that $F(0)=0$, $F(1)=1$, and $h_{\mathrm{top}}(F)=0$.
\end{exmp}

\begin{figure}[ht]
\centering
\begin{tikzpicture}[x=5cm,y=5cm]
\draw[gray!25,densely dotted] (0,0) -- (1,1);
\begin{scope}[cm={0.8,0,0,0.8,(0.1,0.1)}]
  \draw[gray!25,densely dotted]
    (1/3,0) -- (1/3,1) (2/3,0) -- (2/3,1)
    (0,1/3) -- (1,1/3) (0,2/3) -- (1,2/3);
  \begin{scope}[cm={1/3,0,0,1/3,(2/3,0)}]\FirstGrid{3}\end{scope}
  \Graph{3}
  \draw[gray!25,densely dotted] (0,0) rectangle (1,1);
\end{scope}
\draw[line width=0.1pt] (0,0) -- (0.1,19/30);
\draw[line width=0.1pt] (0.9,3/10) -- (1,1);
\draw[gray!60] (0,0) rectangle (1,1);
\end{tikzpicture}
\caption{A schematic finite-stage approximation of the modified Delahaye construction.}
\label{fig:Cantor-odo}
\end{figure}

\begin{thm}
\label{thm:entropy-hidden}
There exists a continuous interval map $S$ whose global Milnor attractor is the closure of the union of solenoids, each having a basin of attraction of zero Lebesgue measure, carrying zero topological entropy, and which can moreover be chosen to have Hausdorff dimension zero, while a disjoint zero-measure Cantor invariant set carries the full topological entropy of $S$. In particular,
$$
\Lambda(S)\subsetneq\Omega(S).
$$
\end{thm}
\begin{proof}
Let $F:[0,1]\to[0,1]$ be the map from Example~\ref{ex:delahaye-hidden-periodic}, and let $A_F=\Lambda(F)$. Fix $a\in(0,1)$ and define $S:[0,2]\to[0,2]$ by
\[
S(x)=
\begin{cases}
aF(x/a), & x\in[0,a],\\[2mm]
a\left(1-\dfrac{x-a}{1-a}\right), & x\in[a,1],\\[3mm]
4(x-1), & x\in[1,\frac32],\\[3mm]
4(2-x), & x\in[\frac32,2].
\end{cases}
\]
The map is continuous, $S([0,1])\subset[0,a]$, and $S|_{[0,a]}$ is conjugate to $F$. Put
\[
K=\bigcap_{n\ge0}S^{-n}([1,2]).
\]
The intervals $[5/4,3/2]$ and $[3/2,7/4]$ are two full branches mapped linearly onto $[1,2]$. Hence $K$ is a Cantor repeller, $S|_K$ is conjugate to the full shift on two symbols, and
\[
h_{\mathrm{top}}(S|_K)=\log 2.
\]
Since both inverse branches contract by the factor $1/4$, $K$ has Lebesgue measure zero.

Every point of $[1,2]\setminus K$ eventually leaves $[1,2]$, enters $[0,1]$, and thereafter remains in the forward-invariant interval $[0,a]$. Since each branch of $S$ outside $[0,a]$ is affine with a nonzero slope, preimages of Lebesgue-null sets are Lebesgue null. Hence the basin of $aA_F$ has full Lebesgue measure, and therefore
\[
\Lambda(S)=aA_F.
\]

On the other hand,
\[
K\cap\Lambda(S)=\emptyset,
\qquad
\operatorname{dist}(K,\Lambda(S))\ge 1-a>0.
\]
Furthermore, every non-wandering point outside $K$ belongs to $[0,a]$, and hence
\[
\Omega(S)\subset \Omega(S|_{[0,a]})\cup K.
\]
Because $h_{\mathrm{top}}(F)=0$, the entropy outside the repeller is zero, while the restriction to $K$ has entropy $\log 2$. Therefore
\[
h_{\mathrm{top}}(S)=\log 2.
\]
Thus the entire topological entropy of $S$ is carried by a zero-measure Cantor repeller lying at positive distance from the global Milnor attractor.

Since the map $S$ is piecewise linear (with finitely many linear pieces outside $[0,a]$ and on $[0,a]$), every periodic interval is eventually refined into two smaller periodic intervals. Consequently, no $\omega$-limit set has a basin of attraction of positive Lebesgue measure.

The global Milnor attractor can also be made to have Hausdorff dimension zero by conjugating $S$ with a suitable increasing homeomorphism of $[0,2]$ that is the identity on $[1,2]$ and maps the attractor onto a Cantor set of Hausdorff dimension zero. This conjugacy preserves the entropy-carrying Cantor repeller and all qualitative dynamical properties discussed above.
\end{proof}

Theorem~\ref{thm:entropy-hidden} shows that the global Milnor attractor may describe the asymptotic behavior of Lebesgue-almost every initial condition while still failing to capture the topological complexity of the system. In this example, all topological entropy is supported on a disjoint zero-measure Cantor repeller lying outside the global Milnor attractor. The identity
$$
\Lambda(f)=\Omega(f)
$$
established in Theorem~\ref{thm:A} excludes precisely this phenomenon for a residual family of interval maps.

\section{Interval map satisfying $\Amin\subsetneq\Astat\subsetneq \Lambda$}

In the following section, we present the example of an interval map whose physical attractor is not equal to the statistical one. It is based on the construction of a carefully chosen full measure subset of sequences $X$ contained in the full shift, whose image under a suitable map is a full $\lambda$-measure subset of the interval. As the final result obtained for the interval is strictly dependent on the combinatorial and dynamical behavior of the elements in $X$, below we recall some basic definitions and properties from symbolic dynamics.

\subsection{Symbolic dynamics} For the alphabet $\mathcal{A}=\{0,1\}$ let $$\Sigma = \{x = (x_i)_{i\geq 0}: x_i \in \mathcal{A}, i\geq 0\}$$ denote the full shift over $\mathcal{A}$. Let $T:\Sigma \ni (x_i)_{i\geq 0}  \to  (x_{i+1})_{i\geq 0} \in \Sigma$ denote the shift map. 
 The finite sequences of symbols over $\mathcal{A}$ are called \emph{words} or \emph{blocks}.  The \emph{length} of a word is the number of symbols in that word. For any $n\geq 1$ let $\mathcal{L}_n = \{x_0\dots x_{n-1}: x_i \in \mathcal{A}, i=0,\dots,n-1\}$ be the set of all words of length $n$ over the alphabet $\mathcal{A}$ and let $\mathcal{L} = \bigcup_{n\geq 1}\mathcal{L}_n$ be the set of all finite words over $\mathcal{A}$. For any $w \in \mathcal{L}$ a \emph{cylinder set of the word }$w$ is the set $[w]=\{wx: x \in \Sigma\}$, that is, the set of all sequences in $\Sigma$ that begin with $w$. For any $n\geq 1$ and any letter $a \in \mathcal{A}$ we denote by $a^n$ the word of length $n$ consisting of $n$ repetitions of letter $a$. 

We introduce a natural order on $\mathcal{L}$ as follows:
$$w_1 \prec w_2 \Leftrightarrow (|w_1|<|w_2|) \vee (|w_1|=|w_2|\wedge w_1 <_2 w_2),$$
where $<_2$ denotes the usual order of binary numbers. The first elements in this order are
$$
0 \prec 1 \prec 00 \prec 01\prec 10 \prec 11\prec 000\prec \dots
$$
Throughout the paper, we will use the notation $\mathcal{L} = \{w_n: n\geq 0\}$, assuming that $w_0 = 0$ and $w_{n+1} = \min\{w \in \mathcal{L}: w_n\prec w\}$ for every $n\geq 0$.

For $x \in \Sigma$ and integers $0 \leq k < l$, we use two related notations:
$$
x_{[k,l]} = x_k x_{k+1} \dots x_l, 
\qquad 
x_{[k,l)} = x_k x_{k+1} \dots x_{l-1}.
$$
Both denote finite subsequences of $x$, called \emph{blocks} (or \emph{subwords}) of $x$. Such a subsequence is called \emph{block of $x$} or \emph{subword of $x$}. For any $l>0$, a \emph{prefix of length $l$} of the word $x$ (finite or infinite) is $x_{[0,l)}$.  
Denote $\mathbf{0} = 0^{\infty}$ and $\mathbf{1}=1^{\infty}.$ 

\begin{exmp}
We will show that, for the example constructed below,
$$
\Amin = \{0\}
\subsetneq 
\Astat = \{0,1\}
\subsetneq 
\Lambda = [0,1].
$$
These inclusions will be established by Lemmas~\ref{lem:Astat}, \ref{lem:Amin}, and~\ref{lem:Lambda}.
\end{exmp}

{\bf Construction}: 
We start with the definition of a particular family of infinite sequences $\{x_c\}_{c \in \Sigma}\subset \Sigma$. Each element $x_c$ is constructed by repetitively combining the following three types of blocks, whose lengths depend on the sequence $\{k_n\}_{n\geq 0}$, where $k_{n} = 2^{2^{n}}$ and $n\geq 0$:
\begin{align*}
A_n &= 0^{k_{2n}}, n\geq 0, \\
B_n^c &= \begin{cases}
1^{k_{2n+1}-|w_n|} \text{ if }w_n = c_{[0,|w_n|)}\\
0^{k_{2n+1}-|w_n|} \text{ if }w_n \neq c_{[0,|w_n|)},
\end{cases}\\
w_n &\in \mathcal{L}.
\end{align*}
In particular, $|A_n| = k_{2n}$ and $|B_n^cw_n| = k_{2n+1}$. For any $c \in \Sigma$, the sequence $x_c$ is defined as follows: 
$$
x_c = A_0B_0^cw_0A_1B_1^cw_1A_2B_2^cw_2A_3B_3^cw_3A_4B_4^cw_4A_5B_5^cw_5A_6B_6^cw_6\dots
$$

For technical reasons, we introduce two additional sequences of lengths, namely $\{a_n\}_{n\geq 0}$ and $\{b_n\}_{n \geq 0}$:
\begin{align*}
a_0 &= k_0, \\
a_n & = k_0+\dots+k_{2n} = \sum_{i=0}^{2n}2^{2^{i}} \text{ for }n \geq 1,\\
b_0 &= k_0+k_1,\\
b_n &= k_0+\dots+k_{2n}+k_{2n+1} = \sum_{i=0}^{2n+1}2^{2^{i}} \text{ for }n \geq 0.
\end{align*}

For $n \geq 0$ and every $c \in \Sigma$, the value $a_n$ indicates the position in $x_c$ where the block $A_n$ ends, while $b_n$ indicates the position where the block $w_n$ ends.

We summarize below several asymptotic relations between the sequences $a_n$, $b_n$, and $k_n$, which will be used repeatedly in the subsequent proofs.

\medskip
\begin{equation}
\label{est:E1}
\frac{a_n}{k_{2n}} \longrightarrow 1,
\qquad
\frac{b_n}{k_{2n+1}} \longrightarrow 1.
\end{equation}
Indeed, since $k_m = 2^{2^m}$ and $k_{m+1}=k_m^2$, each term $k_{m+1}$ 
dominates all preceding ones. Hence 
$a_n = k_{2n} + \sum_{i=0}^{2n-1} k_i \le k_{2n} + 2k_{2n-1}$ and 
$b_n = k_{2n+1} + \sum_{i=0}^{2n} k_i \le k_{2n+1} + 2k_{2n}$, giving 
$1 \le \tfrac{a_n}{k_{2n}} \le 1 + \tfrac{2}{k_{2n-1}}$ and 
$1 \le \tfrac{b_n}{k_{2n+1}} \le 1 + \tfrac{2}{k_{2n}}$, 
which yields the desired limits.

\medskip

For $n \ge 1$, using $k_m = 2^{2^m}$ we have $k_{2n} = (k_{2n-1})^2$, 
so $\tfrac{2k_{2n-1}}{k_{2n}} = \tfrac{2}{k_{2n-1}} \to 0$ as $n \to \infty$.  
Since $a_n = \sum_{i=0}^{2n} k_i \ge k_{2n}$ and 
$\sum_{i=0}^{2n-1} k_i \le 2k_{2n-1}$, we obtain
\begin{equation}\label{est:E2}
\frac{\sum_{i=0}^{n-1} k_{2i+1}}{a_n}
\;\le\;
\frac{2k_{2n-1}}{k_{2n}}
=\frac{2}{k_{2n-1}}
\;\longrightarrow\; 0.
\end{equation}

Denote 
$Q = \{x_c : c \in \Sigma\} \subset \Sigma$ 
and define 
$X = \bigcup_{n \in \mathbb{N}} T^n(Q)$. 
Consider the tent map $\varphi$ on the interval $[0,1]$. 
The full shift $(\Sigma, T)$ introduced above is an almost one-to-one extension of $([0,1], \varphi)$. 
Let $\Phi : \Sigma \to [0,1]$ be the natural coding map, which is one-to-one almost everywhere and two-to-one only on the set 
$$\{w01^{\infty} : w \in \mathcal{L}\} \cup \{w10^{\infty} : w \in \mathcal{L}\}.$$ 
In particular, $\Phi$ is injective on $X$. 

The image $\Phi(Q) \subset [0,1]$ is a Cantor set, and 
$$
\Phi(X) = \bigcup_{n=0}^{\infty} \varphi^n(\Phi(Q))
$$
is a dense countable union of Cantor sets in $[0,1]$. 
By the Oxtoby–Ulam theorem, there exists an order-preserving homeomorphism 
$h : [0,1] \to [0,1]$ such that $\lambda(\tilde{X}) = 1$, where $\tilde{X} = h(\Phi(X))$. 
Observe that the endpoints of the interval correspond to the constant sequences:
$$
(h \circ \Phi)(\mathbf{0}) = 0,
\qquad
(h \circ \Phi)(\mathbf{1}) = 1.
$$
Define a measure $\mu = h^{\star}\lambda$ on $[0,1]$. 
By construction, $\mu(\Phi(Q)) > 0$ (as ensured by the Oxtoby–Ulam theorem), 
and $\mu$ is not invariant under $\varphi$. 
Next, define the map 
$$
f = h \circ \varphi \circ h^{-1}
$$
and the non-atomic measure $\nu = \Phi^{\star}\mu$ on $\Sigma$, 
where for any measurable set $A \subset \Sigma$ we write 
$\Phi^{\star}\mu(A) = \mu(\Phi(A\cap X))$.  
Since $\Phi$ is injective on $X$ and $\mu(\Phi(X)) = 1$, it follows that $\nu(X) = 1$, that is, $\nu$ is fully supported on $X$.

The relations among the maps defined above are summarized in the following commutative diagram:

\begin{equation*}
  \begin{tikzcd}
{(\Sigma, \nu)} \arrow[rr, "\Phi"] \arrow[dd, "T"] &  & {([0,1], \mu)} \arrow[rr, "h"] \arrow[dd, "\varphi"] &  & {([0,1], \lambda)} \arrow[dd, "f"] \\
                                                      &  &                                             &  &                                  \\
{(\Sigma, \nu)} \arrow[rr, "\Phi"]                      &  & {([0,1]  ,\mu}) \arrow[rr, "h"]               &  & {([0,1], \lambda)}                
\end{tikzcd}  
\end{equation*}

\begin{lem}\label{lem:Astat}
For the interval map $f$ defined above, we have $\Astat(f) = \{0,1\}$.
\end{lem}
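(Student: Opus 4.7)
The plan is to transfer the problem to the full shift via the conjugacy $f=h\circ\varphi\circ h^{-1}$, reduce it to computing $\sigma_T(x_c)$ for an arbitrary $c\in\Sigma$, and then carry out a direct combinatorial estimate using the block structure of $x_c$. Since $h\circ\Phi$ is continuous with $h(\Phi(\mathbf{0}))=0$ and $h(\Phi(\mathbf{1}))=1$, both $\mathbf{0}$ and $\mathbf{1}$ lie outside the two-to-one set of $\Phi$, and $\Phi$ is injective on $X$, a routine verification shows that for every $x=h(\Phi(z))$ with $z\in X$ the map $h\circ\Phi$ carries $\sigma_T(z)$ onto $\sigma_f(x)$. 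As $\lambda(\tilde X)=\nu(X)=1$ and the $\sigma$-limit set is invariant under finite shifts, the statement will follow from Theorem~\ref{def_stat} once I prove $\sigma_T(x_c)=\{\mathbf{0},\mathbf{1}\}$ for every $c\in\Sigma$.

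For the inclusion $\{\mathbf{0},\mathbf{1}\}\subseteq\sigma_T(x_c)$ I will choose appropriate subsequences of times. Along $N=a_n$, the block $A_n=0^{k_{2n}}$ alone contributes at least $k_{2n}-m$ indices $k<a_n$ with $T^k(x_c)\in[0^m]$; by~\eqref{est:E1}, $k_{2n}/a_n\to 1$, so the frequency of every cylinder around $\mathbf{0}$ tends to $1$, giving $\mathbf{0}\in\sigma_T(x_c)$. Since $\{w_n\}_{n\ge 0}$ enumerates $\mathcal{L}$, for each $m\ge 1$ there is a unique $n(m)$ with $w_{n(m)}=c_{[0,m)}$; on those indices $B_{n(m)}^c=1^{k_{2n(m)+1}-|w_{n(m)}|}$, and the analogous estimate $b_{n(m)}/k_{2n(m)+1}\to 1$ yields positive density of $[1^m]$ along $N=b_{n(m)}$, whence $\mathbf{1}\in\sigma_T(x_c)$.

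For the reverse inclusion, take any $y\in\Sigma\setminus\{\mathbf{0},\mathbf{1}\}$ and choose $m$ so that $v:=y_{[0,m)}$ contains both $0$ and $1$; then $[v]$ is a neighborhood of $y$ disjoint from $\{\mathbf{0},\mathbf{1}\}$, and it suffices to prove that $[v]$ is visited with density zero by $\Orb_T(x_c)$. Because neither $A_i$ nor $B_i^c$ contains both symbols, any occurrence of $v$ in $x_c$ must either sit entirely inside some $w_i$ (at most $|w_i|$ positions per such block) or straddle one of the $O(n)$ block boundaries present in $x_c[0,a_{n+1}]$ (at most $|v|$ positions each). Using that $\{w_n\}$ enumerates all binary words yields $\sum_{i\le n}|w_i|=O(n\log n)$, so the number of occurrences of $v$ in any prefix of length $N\le a_{n+1}$ is polynomial in $n$; since $N\ge a_n\ge k_{2n}=2^{2^{2n}}$ grows super-exponentially, the frequency tends to~$0$ and $y\notin\sigma_T(x_c)$.

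The main technical nuisance is the careful split of occurrences of $v$ across the four successive block types $A_i$, $B_i^c$, $w_i$, $A_{i+1}$ and the verification that both the interior contribution $\sum|w_i|$ and the boundary contribution $O(n|v|)$ remain negligible compared to $a_n$. The super-exponential scaling $k_{n+1}=k_n^2$ recorded in~\eqref{est:E1}--\eqref{est:E2} makes this estimate very forgiving; the crucial combinatorial input is that the enumeration property of $\{w_n\}$ forces $\mathbf{1}\in\sigma_T(x_c)$ for \emph{every} $c\in\Sigma$, so the conclusion $\Astat(f)=\{0,1\}$ holds uniformly over~$X$.
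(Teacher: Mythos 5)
Your proposal is correct and follows essentially the same route as the paper: transfer to the shift via $h\circ\Phi$, show $\mathbf{0}\in\sigma_T(x_c)$ using the blocks $A_n$ along times $a_n$ and $\mathbf{1}\in\sigma_T(x_c)$ using the blocks $B_n^c$ with $w_n=c_{[0,|w_n|)}$ along times $b_n$, and kill every non-constant cylinder $[v]$ by counting occurrences inside the $w_i$ and near block boundaries against the super-exponential growth of $a_n$. Your treatment of intermediate times $a_n\le N\le a_{n+1}$ in the density-zero estimate is in fact slightly more explicit than the paper's, but the argument is the same.
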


\begin{proof}
Fix $c \in \Sigma$. For any $K \ge 1$, let $U_0 = [0^K]$ and $U_1 = [1^K]$.
For each $n > 0$, consider the block $A_n = 0^{k_{2n}}$ in $x_c$.  
Since $A_n$ occupies the coordinates $b_{n-1},\ldots,a_n-1$, all iterates
$T^j(x_c)$ for $b_{n-1}\le j<a_n-K$,
lie in $U_0$. Hence, by \eqref{est:E1},
$$
\frac{1}{a_n} \sum_{j=0}^{a_n-1} \chi_{U_0}(T^j(x_c))
\ge
\frac{k_{2n}-K}{a_n}
\longrightarrow 1,
$$
which means that $\mathbf{0}\in\sigma_T(x_c)$.

There are infinitely many $n$ for which $B_n^c = 1^{k_{2n+1} - |w_n|}$,
so for those $n$ the orbit segment 
$\{T^j(x_c): a_n \le j < a_n + k_{2n+1} - |w_n| - K\}$ 
lies in $U_1$. Thus, by \eqref{est:E1},
$$
\frac{1}{b_n} \sum_{j=0}^{b_n-1} \chi_{U_1}(T^j (x_c))
\ge \frac{k_{2n+1} - |w_n| - K}{b_n} \longrightarrow 1,
$$
and $\mathbf{1} \in \sigma_T(x_c)$.

If $v$ is any non-constant finite word, visits of the orbit to the cylinder $[v]$ occur only inside the short blocks $w_n$ or within a $|v|$–neighborhood of their boundaries. Since 
$\sum_{i=0}^n (|w_i|+ 2|v|) / b_n \to 0$, we get 
$$\lim_{N\to\infty} \frac{1}{N} \sum_{j=0}^{N-1} \chi_{[v]}(T^j (x_c)) = 0.$$
Therefore $\sigma_T(x_c) = \{\mathbf{0}, \mathbf{1}\}$ for all $c \in \Sigma$.
As $X = \bigcup_{n\ge0} T^n(Q)$ and $\nu(X) = 1$, it follows that 
$\sigma_T(x) = \{\mathbf{0}, \mathbf{1}\}$ for $\nu$–almost every $x \in \Sigma$.

The map $h \circ \Phi$ is one-to-one on $X$ and sends $\mathbf{0}, \mathbf{1}$ to $0, 1$.  
Since $\lambda(\tilde X) = \lambda(h(\Phi(X))) = 1$, we have that for $\lambda$–a.e. $\tilde x \in [0,1]$ there exists a unique $x \in X$ such that $\tilde x = (h \circ \Phi)(x)$.  
Time averages for $f$ along $\tilde x$ coincide with those for $T$ along $x$, hence 
$\sigma_f(\tilde x) = (h \circ \Phi)(\sigma_T(x)) = \{0,1\}$ 
for $\lambda$–a.e. $\tilde x$.

By Theorem~\ref{def_stat} we have that $\Astat(f)$ is the smallest closed set containing the $\sigma$–limit sets of almost all orbits, therefore we obtain $\Astat(f) = \{0,1\}$.
\end{proof}

\begin{lem}\label{lem:no_nbhd_of_1_is_essential}
The cylinder $[1]$ is not essential for the system $(\Sigma, T, \nu)$.
\end{lem}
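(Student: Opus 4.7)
The plan is to show that $\nu(E_k)\to 0$ as $k\to\infty$, where $E_k=\{x\in\Sigma:g_k(x)>\varepsilon\}$ and $g_k(x)=\frac{1}{k}\sum_{i=0}^{k-1}\chi_{[1]}(T^ix)$ is the density of $1$'s in the first $k$ coordinates of $x$. Since $\nu$ is concentrated on $X=\bigcup_{j\geq 0}T^jQ$, every $x\in\operatorname{supp}(\nu)$ has the form $x=T^jx_c$ for some $j\geq 0$ and $c\in\Sigma$, and $g_k(T^jx_c)$ equals the density of $1$'s in the window $(x_c)_{[j,j+k)}$.

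The first step is an elementary block estimate. In $x_c$ a coordinate can equal $1$ only inside some block $B_n^c=1^{k_{2n+1}-|w_n|}$, which happens precisely when $w_n$ is a prefix of $c$, or inside one of the short words $w_n$. Because $k_n=2^{2^n}$ grows doubly exponentially, $\sum_{i<n}k_{2i+1}\le 2k_{2n-1}=o(k_{2n+1})$ and the total length of the $w_i$'s intersecting any window of size $k$ is at most $O(\log\log k)$. Consequently, $g_k(T^jx_c)>\varepsilon$ forces the window $(x_c)_{[j,j+k)}$ to contain at least $\varepsilon k/2$ ones coming from a single $B_m^c$ with $w_m$ a prefix of $c$, and this $m=m(j,k)$ must satisfy $k_{2m+1}\ge\varepsilon k/2$; in particular $|w_m|\to\infty$ as $k\to\infty$.

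The second step converts this prefix condition on $c$ into a cylinder bound in $\Sigma$. Specifying the first $|w_m|$ symbols of $c$ determines $B_i^c$ for every $i$ with $|w_i|\le|w_m|$, and therefore fixes the first $b_{N(|w_m|)}$ coordinates of $x_c$, where $N(\ell)$ denotes the largest index with $|w_i|\le\ell$. Because every admissible shift $j$ lies in $[0,b_m]\subset[0,b_{N(|w_m|)}]$, the image $T^jx_c$ is trapped in a cylinder $[u_{j,m}]\subset\Sigma$ of length $b_{N(|w_m|)}-j$, which tends to infinity with $k$. From the defining identity $\nu([u])=\lambda(h(\Phi([u])))=h(y_2)-h(y_1)$ for the dyadic interval $\Phi([u])=[y_1,y_2]$ of length $2^{-|u|}$, uniform continuity of the homeomorphism $h$ on the compact interval $[0,1]$ yields $\sup_{|u|=L}\nu([u])\to 0$ as $L\to\infty$. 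This already proves the claim for the $Q$-piece.

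The main obstacle is assembling the fibrewise estimates into a bound on the entire union $\bigcup_{j\ge 0}T^jQ$: the set of admissible shifts for a given $m$ may have cardinality comparable to $k_{2m+1}$, so the per-cylinder bound in the second step has to defeat this combinatorial factor. The leverage comes from the gap $b_{N(|w_m|)}-b_m$, which is itself doubly exponential in $|w_m|$ while $k_{2m+1}$ is only singly exponential in the next lower tower height, so the product tends to zero once $m$ is large. Finitely many values of $m$ are relevant for each $k$ (those with $k_{2m+1}$ between $\varepsilon k/2$ and a fixed multiple of $k$), while the tail $\bigcup_{j\ge N}T^jQ$ can be made to have $\nu$-mass below any prescribed $\eta>0$ by taking $N$ large and using $\sum_j\nu(T^jQ)\le 1$. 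Choosing $N=N(k)$ so that both the head and the tail are $o(1)$ yields $\nu(E_k)\to 0$ and proves that $[1]$ is not essential.
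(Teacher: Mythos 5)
Your overall strategy --- reduce the claim to $g_k=\tfrac1k\sum_{i<k}\chi_{[1]}\circ T^i\to0$ in $\nu$-measure, localize the ones in a length-$k$ window to a single block $B_m^c$ with $w_m$ a prefix of $c$, and convert that prefix condition into membership in a long cylinder --- is sound, and it is in fact more faithful to the definition of ``essential'' than the paper's own proof, which only estimates the averages at the block times $a_n$ and argues pointwise on $X$ via \eqref{est:E2}. Your Steps 1 and 2 are essentially correct.

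The gap is in the assembly, exactly where you place ``the leverage.'' You claim the union bound over the $\sim k_{2m+1}$ admissible shifts $j$ is defeated because the cylinder length $b_{N(|w_m|)}-j$ is doubly exponential in $|w_m|$ while $k_{2m+1}$ is only singly exponential. This compares the number of shifts with the \emph{length} of the cylinder, not with its $\nu$-\emph{measure}, and $\nu$ is not the Bernoulli measure: it is $\lambda\circ h\circ\Phi$ for an Oxtoby--Ulam homeomorphism $h$, for which the only available estimate is $\sup_{|u|=L}\nu([u])\to0$ with no rate whatsoever. Worse, the cylinders of length $b_{N(\ell)}$ meeting $Q$ number only $2^{\ell}$ and together carry mass $\nu(Q)>0$, so at least one of them has measure at least $\nu(Q)2^{-\ell}\sim \nu(Q)/m$ for $\ell=|w_m|$; multiplied by $k_{2m+1}=2^{2^{2m+1}}$ shifts, your union bound diverges rather than vanishes. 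The repair is hiding in your final sentence, but it must carry the entire weight: cut at a slowly diverging $N(k)$, so the head contains at most $N(k)$ shifts, each trapped in a cylinder of length at least $\varepsilon k/2\to\infty$, contributing $O\bigl(N(k)\sup_{|u|\ge \varepsilon k/2}\nu([u])\bigr)$, which is $o(1)$ if $N(k)$ diverges slowly relative to that (rate-free) supremum; \emph{all} remaining $\sim k_{2m+1}$ shifts must be absorbed into the tail $\nu\bigl(\bigcup_{j\ge N(k)}T^jQ\bigr)$. For the tail you also still owe the assertion $\sum_j\nu(T^jQ)\le1$, which requires the sets $T^jQ$ to be essentially pairwise disjoint (a tail $(x_c)_{[j,\infty)}$ determines $(j,c)$ through the rigid positions of the blocks $A_n$); this is plausible but not free. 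By contrast, the paper's proof sidesteps all of this by checking only $k=a_n$, where the number of ones up to time $a_n$ is bounded by $\sum_{i<n}k_{2i+1}$ uniformly in $c$ --- a much shorter route, though one that addresses only a single subsequence of times.
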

\begin{proof}
Put
$$
Q_w=\{x_c:c\in[w]\}\subset Q
$$
for every finite word $w$. Fix $\varepsilon>0$. Since
$X=\bigcup_{m\ge0}T^m(Q)$ and 
$\nu(X)=1$ there is
$M\ge0$ such that
$$
\nu\left(X\setminus\bigcup_{m=0}^{M}T^m(Q)\right)<\varepsilon.
$$
We first consider points of $Q$. Let $b_{n-1}\le N\le b_n$. If
$b_{n-1}\le N\le a_n$, then $N$ lies in the zero block $A_n$. By
\eqref{est:E2}, the contribution of all blocks preceding
$B_{n-1}^cw_{n-1}$ is negligible, and a large average can persist only if
$B_{n-1}^c$ is a block of ones, that is, only if $c\in[w_{n-1}]$.
If $a_n\le N\le b_n$, then $N$ lies in $B_n^cw_n$. At time $a_n$, the
frequency of visits to $[1]$ tends to zero by \eqref{est:E2}. Moreover,
$B_n^c$ consists of zeros unless $c\in[w_n]$, and
$|w_n|/k_{2n+1}\to0$. Hence, for all sufficiently large $n$,
$$
\frac1N\sum_{j=0}^{N-1}\chi_{[1]}(T^j(x_c))>\varepsilon
\quad\Longrightarrow\quad
c\in[w_{n-1}]\cup[w_n].
$$
The same implication holds, uniformly, for $T^m (x_c)$ with $0\le m\le M$.
Since $|w_n|\to\infty$ and $\nu$ is non-atomic, we have
$$
\max_{0\le m\le M}\nu\bigl(T^m(Q_{w_n})\bigr)\longrightarrow0.
$$
Consequently,
\begin{align*}
&\nu\left\{x\in\Sigma:
\frac1N\sum_{j=0}^{N-1}\chi_{[1]}(T^j(x))>\varepsilon
\right\}\\
&\qquad\le
\varepsilon+
\sum_{m=0}^{M}
\nu\bigl(T^m(Q_{w_{n-1}}\cup Q_{w_n})\bigr)
\longrightarrow\varepsilon.
\end{align*}
As the initial choice of $\varepsilon$ is arbitrary, the time averages of
$\chi_{[1]}$ converge to zero in $\nu$-measure. Therefore $[1]$ is not
essential.
\end{proof}

\begin{lem}\label{lem:Amin}
$\Amin(f) = \{0\}$.
\end{lem}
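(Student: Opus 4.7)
The plan is to combine three ingredients already available: the inclusion $\Amin(f)\subseteq\Astat(f)=\{0,1\}$ from Lemma~\ref{lem:attr} together with Lemma~\ref{lem:Astat}, the non-essentiality of $[1]\subset\Sigma$ from Lemma~\ref{lem:no_nbhd_of_1_is_essential}, and a transfer of essentiality between $(\Sigma,T,\nu)$ and $([0,1],f,\lambda)$ along the semi-conjugacy $\pi=h\circ\Phi$. Once these are in place, I must show separately that $1\notin\Amin(f)$ and that every neighborhood of $0$ is essential.

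The first step is to record the transfer principle explicitly. Since $\pi\circ T=f\circ\pi$, and the chain of pullbacks $\nu=\Phi^{\star}\mu$, $\mu=h^{\star}\lambda$ combined with surjectivity of $\Phi$ and of the homeomorphism $h$ yields $\nu(\pi^{-1}(A))=\lambda(A)$ for every Borel $A\subset[0,1]$, we obtain, for any open $W\subset[0,1]$ and any $n\ge 1$, $\varepsilon>0$,
$$
\lambda\!\left(\Bigl\{\tilde x\in[0,1]:\tfrac{1}{n}\sum_{j=0}^{n-1}\chi_W(f^j\tilde x)>\varepsilon\Bigr\}\right)
=\nu\!\left(\Bigl\{x\in\Sigma:\tfrac{1}{n}\sum_{j=0}^{n-1}\chi_{\pi^{-1}(W)}(T^j x)>\varepsilon\Bigr\}\right).
$$
Hence $W$ is essential for $(f,\lambda)$ if and only if $\pi^{-1}(W)$ is essential for $(T,\nu)$.

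To exclude $1$, I would choose $W=h\bigl((1-2^{-K},1]\bigr)$ for some $K\ge 1$. A direct estimate on $\Phi$ shows $\Phi^{-1}\bigl((1-2^{-K},1]\bigr)\subseteq[1^K]\subseteq[1]$, whence $\pi^{-1}(W)\subseteq[1]$. Lemma~\ref{lem:no_nbhd_of_1_is_essential} then gives that $[1]$, and consequently $\pi^{-1}(W)$ and $W$, are not essential, so $1\notin\Amin(f)$. To include $0$, take any open $W\ni 0$; since $\pi$ is continuous and $\pi(\mathbf{0})=0$, there exists $K\ge 1$ with $[0^K]\subseteq\pi^{-1}(W)$, so it suffices to show $[0^K]$ is essential for $(T,\nu)$. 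Reusing the estimate already derived in the proof of Lemma~\ref{lem:Astat}, for every $x_c\in Q$,
$$
\tfrac{1}{a_n}\sum_{j=0}^{a_n-1}\chi_{[0^K]}(T^j x_c)\;\ge\;\frac{k_{2n}-K}{a_n}\xrightarrow[n\to\infty]{}1,
$$
and the same convergence extends, up to an $O(m/a_n)$ error, to every $x=T^m x_c\in T^m(Q)$, hence pointwise to all $x\in X=\bigcup_{m\ge 0}T^m(Q)$. As $\nu(X)=1$, Fatou's lemma applied to $B_n=\bigl\{x:\tfrac{1}{a_n}\sum_j\chi_{[0^K]}(T^j x)>\tfrac12\bigr\}$ yields $\nu(B_n)\to 1$, so the pair $(\varepsilon,\delta)=(\tfrac12,\tfrac12)$ with the tail subsequence $\{a_n\}_{n\ge n_0}$ witnesses essentiality of $[0^K]$. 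Transferring back, every neighborhood of $0$ is essential and $0\in\Amin(f)$.

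The main obstacle I anticipate is the careful bookkeeping at the boundary of the symbolic--interval coding: one must confirm the inclusion $\Phi^{-1}\bigl((1-2^{-K},1]\bigr)\subseteq[1]$ used in part (a), and verify that the countable set on which $\Phi$ is two-to-one is $\nu$-null (which holds because $\nu$ is non-atomic). Once this calibration is fixed, both directions reduce to direct applications of the pointwise estimates already obtained in the preceding lemmas.
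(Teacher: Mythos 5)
Your proposal is correct and, for the essential step, follows the same route as the paper: combine $\Amin(f)\subseteq\Astat(f)=\{0,1\}$ with the non-essentiality of $[1]$ from Lemma~\ref{lem:no_nbhd_of_1_is_essential}, transported through $\pi=h\circ\Phi$, to conclude $1\notin\Amin(f)$. Your version is somewhat more careful than the paper's in making the transfer of essentiality explicit via the identity $\nu(\pi^{-1}(A))=\lambda(A)$ and the pointwise relation $\chi_{\pi^{-1}(W)}\circ T^{j}=\chi_{W}\circ f^{j}\circ\pi$, and in exhibiting a concrete neighborhood $W=h\bigl((1-2^{-K},1]\bigr)$ with $\pi^{-1}(W)\subseteq[1]$, where the paper only asserts the existence of an essential open set inside $(h\circ\Phi)([1])$.

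The genuine difference is your second half: you prove directly that every neighborhood of $0$ is essential, using the lower bound $\tfrac{1}{a_n}\sum_{j<a_n}\chi_{[0^K]}(T^{j}x_c)\ge (k_{2n}-K)/a_n\to 1$ from the proof of Lemma~\ref{lem:Astat} together with dominated convergence to get $\nu(B_n)\to 1$. The paper omits this entirely and passes from $1\notin\Amin(f)$ and $\Amin(f)\subseteq\{0,1\}$ straight to $\Amin(f)=\{0\}$, which implicitly invokes the nonemptiness of the physical attractor (guaranteed by the Krylov--Bogolyubov construction, since weak$^{*}$ limits of $\mu_n^{\mathrm{avg}}$ always exist and have nonempty support). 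Both closings are legitimate; yours is self-contained and does not appeal to that background fact, at the cost of one extra estimate that is anyway already available from Lemma~\ref{lem:Astat}. No gaps.
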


\begin{proof}
By definition, $\Amin(f) \subseteq \Astat(f)$, and from Lemma~\ref{lem:Astat} we know that $\Astat(f) = \{0,1\}$.  
To prove the claim, it suffices to show that $1 \notin \Amin(f)$.

Suppose, for contradiction, that $1 \in \Amin(f)$.  
Then every neighborhood of $1$ would be essential.  
In particular, there would exist an essential open set $U \subset [0,1]$ with 
$$
U \subseteq (h \circ \Phi)([1]).
$$
As $(h \circ \Phi)$ is a homeomorphism on $X$ and $\nu$ is the pullback measure,  
this would imply that $[1] \subset \Sigma$ is essential for $(\Sigma, T, \nu)$.  
This contradicts Lemma~\ref{lem:no_nbhd_of_1_is_essential}.  
Hence $1 \notin \Amin(f)$.
As $\Amin(f)$ contains the support of any weak$^*$ accumulation point of $\frac1n\sum_{i=0}^{n-1}f^i_*\lambda$ it is nonempty.
Finally $\Amin(f)\subseteq\Astat(f)=\{0,1\}$, so we conclude that $\Amin(f) = \{0\}$.
\end{proof}

\begin{lem}\label{lem:Lambda}
$\Lambda(f) = [0,1]$.
\end{lem}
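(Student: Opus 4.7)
The plan is to lift the problem to the full shift via the almost one-to-one extension: since $\Lambda(f)$ is the smallest closed set containing $\omega_f(\tilde x)$ for $\lambda$-almost every $\tilde x$, and because $f\circ(h\circ\Phi)=(h\circ\Phi)\circ T$ with $h\circ\Phi$ continuous and $\Sigma$ compact, one has $\omega_f(h(\Phi(x)))=(h\circ\Phi)(\omega_T(x))$ for every $x\in\Sigma$. Hence it suffices to prove that $\omega_T(x)=\Sigma$ for $\nu$-a.e. $x\in\Sigma$; because $\nu$ is fully supported on $X=\bigcup_{n\ge 0}T^n(Q)$ and the condition $\omega_T(x)=\Sigma$ is $T$-invariant, this in turn reduces to establishing $\omega_T(x_c)=\Sigma$ for every $c\in\Sigma$.

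Fix $c\in\Sigma$ and an arbitrary finite word $v\in\mathcal{L}$. I claim that $v$ occurs as a block of $x_c$ at infinitely many positions. By construction the word $w_n$ sits inside $x_c$ at the positions $[b_n-|w_n|,\,b_n)$, so every occurrence of $v$ as a subword of $w_n$ produces an occurrence of $v$ as a block of $x_c$. For each $\ell\ge|v|$ the word $v\cdot 0^{\ell-|v|}\in\mathcal{L}$ contains $v$ as a subword, and as $\ell\to\infty$ these words are pairwise distinct; since $\{w_n\}_{n\ge 0}$ enumerates $\mathcal{L}$, they correspond to pairwise distinct indices $n$ tending to infinity, which proves the claim.

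Granted the claim, fix any $y\in\Sigma$. For every $m\ge 1$ the prefix $y_{[0,m)}$ is a finite word, so $T^j(x_c)\in[y_{[0,m)}]$ for infinitely many $j$. A standard diagonal extraction then produces an increasing sequence $j_m\to\infty$ with $T^{j_m}(x_c)\in[y_{[0,m)}]$, hence $T^{j_m}(x_c)\to y$ in $\Sigma$ and $y\in\omega_T(x_c)$. Since $y$ was arbitrary, $\omega_T(x_c)=\Sigma$. Applying the reduction of the first paragraph, $\omega_f(\tilde x)=h(\Phi(\Sigma))=[0,1]$ for $\lambda$-a.e. $\tilde x$, and therefore $\Lambda(f)=[0,1]$.

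The only substantive point is the combinatorial claim of the middle paragraph, but it reduces to the trivial observation that in any enumeration of all finite binary words each fixed word must occur as a subword of infinitely many entries. All remaining steps are routine manipulations of the commutative diagram $\varphi\circ\Phi=\Phi\circ T$, the compactness of $\Sigma$, and the defining property of the $\omega$-limit set.
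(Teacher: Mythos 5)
Your proof is correct and takes essentially the same route as the paper: show that the orbit of each $x_c$ is dense in $\Sigma$ (you prove the slightly sharper fact $\omega_T(x_c)=\Sigma$ via the observation that every finite word occurs in infinitely many $w_n$), then transfer to $[0,1]$ through the semiconjugacy $h\circ\Phi$ and the full-measure set $\tilde X$. The paper simply asserts the density ``by construction''; your middle paragraph supplies the missing combinatorial detail, and the rest matches.
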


\begin{proof}
By construction, for every $c \in \Sigma$ the orbit of $x_c$ under $T$ is dense in $\Sigma$. 
Since $\Phi$ semi-conjugates $T$ with the tent map $\varphi$ and $h$ is a homeomorphism of $[0,1]$, 
the orbit of $h(\Phi(x_c))$ under $f = h \circ \varphi \circ h^{-1}$ is dense in $[0,1]$. 
Hence $\Lambda = [0,1]$.
\end{proof}

\section*{Acknowledgements}
Magdalena Fory\'s-Krawiec and Piotr Oprocha were partially supported by the National Science Centre, Poland (NCN), grant no.\ 2024/53/B/ST1/00092. Jana H\'antakov\'a was supported by institutional support for the development of research organizations in Czech Republic (RVO) for IČ 47813059.

For the purpose of Open Access, the authors have applied a CC BY public copyright licence to any Author Accepted Manuscript (AAM) version arising from this submission.

\section*{Statements and declarations}
On behalf of all authors, the corresponding author states that there is no
conflict of interest and data sharing is not applicable to this article as no datasets were generated or analysed during the current study.


\begin{thebibliography}{99}
\bibitem{AA2013}
F.~Abdenur and M.~Andersson,
\emph{Ergodic theory of generic continuous maps},
Comm.\ Math.\ Phys.\ \textbf{318} (2013), 831--855.

\bibitem{Akin} E. Akin, \textit{The General Topology of Dynamical Systems}, American Mathematical Society, Providence, Rhode Island(1993).

\bibitem{AAIS} V. I. Arnold, V. S. Afrajmovich, Y. Ilyashenko, L. P. Shilnikov, \textit{Bifurcation Theory and Catastrophe Theory}, Springer-Verlag Berlin Heidelberg (1999).

\bibitem{Ashwin} O. Karabacak, P. Ashwin, \emph{On statistical attractors and the convergence of time averages}, Math. Proc. Camb. Phil. Soc. \textbf{150} (2011), 353--365.

\bibitem{Aoki} N. Aoki, K. Hiraide, \emph{Topological theory of dynamical systems} North-Holland Mathematical Library, 52. North-Holland Publishing Co., Amsterdam, 1994.

\bibitem{Bach} P. S. Bachurin, \emph{The connection between time averages and minimal attractors}, Russian Math. Surveys {\bf 54} no. 6 (1999), 1233--1264.

\bibitem{BC} L. S. Block, W. A. Coppel, Dynamics in one dimension. Lecture Notes in Mathematics, 1513. Springer-Verlag, Berlin, 1992.

\bibitem{Blokh} A. Blokh, \emph{The ``spectral decomposition'' for one-dimensional maps}, Dynam. Report. Expositions Dynam. Systems (N.S.) \textbf{4} (1995), 1--59.

\bibitem{BL1990}
A.~M.~Blokh and M.~Y.~Lyubich,
\emph{Measure and dimension of solenoidal attractors of one-dimensional dynamical systems},
Comm.\ Math.\ Phys.\ \textbf{127} (1990), 573--583.

\bibitem{Bonatti} Ch. Bonatti, M. Li, D. Yang, \emph{On the existence of attractors}, Trans. Am. Math. {\bf 365} (2013), 1369--1391.

%\bibitem{BonattiCrovisier2003} C.~Bonatti and S.~Crovisier, \emph{Récurrence et généricité}, C. R. Acad. Sci. Paris, Ser. I \textbf{336} (2003), 839--844.

\bibitem{CatTroub} E. Catsigeras and S. Troubetzkoy, \emph{Pseudo-physical measures for typical continuous maps of the interval}, arXiv preprint arXiv:1705.10133 (2017).

\bibitem{CE1}
E.~Catsigeras and H.~Enrich,
\emph{SRB-like measures for $C^0$ dynamics},
Bull.\ Pol.\ Acad.\ Sci.\ Math.\ \textbf{59} (2011), no.~2, 151--164.

%\bibitem{CMY} Z.~Cao, X.~Mi and J.~Yang, \emph{On the abundance of Sinai--Ruelle--Bowen measures}, Comm. Math. Phys. \textbf{391} (2022), 1271--1306.

\bibitem{CoHed} E. M. Coven, G. A. Hedlund, \emph{$\overline{P}=\overline{R}$ for maps of the interval}, Proc. of Am. Math. Soc  \textbf{79} (1980), 316--318.


\bibitem{Del} J.-P. Delahaye, \emph{Functions admitting cycles of any power of $2$ and no other cycle}, C. R. Acad. Sci. Paris S\'er. A--B \textbf{291} (1980), A323--A325.

\bibitem{GorIlya} A. Gorodetski, Y. Ilyashenko, \emph{Minimal and strange attractors}, Int. Journ. of Bif. and Chaos, {\bf 6} no. 6, 1177--1183.

% \bibitem{HK} B. R. Hunt, V. Kaloshin, \emph{Prevalence}, Handbook of Dynamical Systems vol. \textbf{3}, Elsevier (2010), 43--87.

%\bibitem{Hunt} B. R. Hunt, J. A. Kennedy, T. Li, H. E. Nusse, \textit{SLYRB measures: natural invariant measures for chaotic systems}, Physica D \textbf{170} (2002), 50--71.

\bibitem{Hurley1982}
M. Hurley,
\emph{Attractors: Persistence, and density of their basins},
Transactions of the American Mathematical Society, 
\textbf{269} (1982), 247--271.

\bibitem{Ilyash}  Y. Ilyashenko, \emph{Some open problems in real and complex dynamical systems}, Nonlinearity, {\bf 21} (2008) no. 7, T101-T107.

\bibitem{Ilyash3} Y. Ilyashenko, \emph{Minimal attractors}, Proceedings of EQUADIFF 2003, World Scientific Publishing, Singapore (2005), 421--428.

\bibitem{Ilyashenko4} Y. Ilyashenko, \textit{The concept of minimal attractor and maximal attractors of partial differential equations of the Kuramoto-Sivashinsky type}, Chaos \textbf{1} (1991), 168--173.

%\bibitem{IlyaNegut} Y. Ilyashenko, A. Negut \emph{Invisible parts of attractors}, Nonlinearity {\bf 23} (2010), 1199--1219.

\bibitem{Kleptsyn} V. Kleptsyn, \textit{An Example of Non-Coincidence of Minimal and Statistical Attractors}, Ergod. Th. Dynam. Sys. \textbf{26} (2006), 759--68. 

\bibitem{OL} J.~Li, P.~Oprocha and G.~Zhang, \emph{Quasi-graphs, zero entropy and measures with discrete spectrum}, Nonlinearity \textbf{35} (2022), no.~3, 1360--1379.

\bibitem{Milnor} J. Milnor, \textit{On the Concept of Attractor}, Commun. Math. Phys. \textbf{99} (1985), 177--195.




\bibitem{Miz} I. Mizera, \emph{Generic properties of one-dimensional dynamical systems}, Ergodic Theory and Related Topics III, 1992, pp. 163--173.

%\bibitem{Nemytskii}V. V. Nemytskii, V. V. Stepanov, \textit{Qualitative Theory of Differential Equations}, Princeton University Press, Princeton, New Jersey (1960).

\bibitem{okunev} A. Okunev, \emph{Milnor Attractors of Skew Products with the Fiber a Circle}, J. Dyn. Control. Syst. \textbf{23} (2017), 421--433.

\bibitem{Oprocha} P. Oprocha, \emph{Shadowing, thick sets and the Ramsey property}, Ergod. Th. Dynam. Sys. \textbf{36} (2016), 1582--1595

\bibitem{Palis} J. Palis, \emph{A global view of dynamics and a conjecture on the denseness of finitude of attractors}, Astérisque {\bf 261} (2000), 335--347

\bibitem{Palis2} J. Palis, \emph{A global perspective for non-conservative dynamics}, Ann. I. H. Poincaré {\bf 22} (2005), 485--507.

%\bibitem{Potrie} R. Potrie, \emph{Wild Milnor attractors accumulated by lower-dimensional dynamics}, Ergodic Theory and Dynamical Systems {\bf 34} (2014), 236--262.

\bibitem{RS} S. Ruette, L. Snoha, \textit{For graph maps, one scrambled pair implies Li-Yorke chaos}. Proc. Amer. Math. Soc. \textbf{142} (2014), no. 6, 2087--2100.

\bibitem{Sharkovsky} A. N. Sharkovsky, S. F. Kolyada, A. G. Sivak, V. V. Fedorenko, \textit{Dynamics of One-Dimensional Maps, Mathematics and Its Applications}, Kluwer Academic Publishers, vol. 407 (1997).

\bibitem{Shark2} A. N. Sharkovskii, \emph{Non-wandering points and the center of a continuous map of the line into itself} (in Ukrainian), Dop. Acad. Nauk Ukr. RSR Ser. A (1964), 865--868.

\bibitem{Sigmund} K. Sigmund, \textit{On minimal centers of attraction and generic points}, Journal für die reine und angewandte Mathematik \textbf{295} (1977), 72--79.

%\bibitem{Steele} T.~H. Steele, \emph{The persistence of $\omega$-limit sets under perturbation of the generating function}, Real Anal. Exchange, \textbf{26} (2000/01), 963--974.

\end{thebibliography}
\end{document}